\def\RSthmtxt{theorem~}\newref{thm}{name = \RSthmtxt}}
\def\RSlemtxt{lemma~}\newref{lem}{name = \RSlemtxt}}
\numberwithin{equation}{section}
\numberwithin{figure}{section}
\theoremstyle{plain}
\newtheorem{thm}{\protect\theoremname}[section]
\theoremstyle{definition}
\newtheorem{defn}[thm]{\protect\definitionname}
\theoremstyle{definition}
\newtheorem{example}[thm]{\protect\examplename}
\theoremstyle{remark}
\newtheorem{rem}[thm]{\protect\remarkname}
\theoremstyle{plain}
\newtheorem{cor}[thm]{\protect\corollaryname}
\theoremstyle{remark}
\newtheorem{claim}[thm]{\protect\claimname}
\theoremstyle{plain}
\newtheorem{lem}[thm]{\protect\lemmaname}
\providecommand{\claimname}{Claim}
\providecommand{\corollaryname}{Corollary}
\providecommand{\definitionname}{Definition}
\providecommand{\examplename}{Example}
\providecommand{\lemmaname}{Lemma}
\providecommand{\remarkname}{Remark}
\providecommand{\theoremname}{Theorem}
\begin{document}
\global\long\def\norm#1{\left\Vert #1\right\Vert }%
\global\long\def\AA{\mathbb{A}}%
\global\long\def\QQ{\mathbb{Q}}%
\global\long\def\PP{\mathbb{P}}%
\global\long\def\CC{\mathbb{C}}%
\global\long\def\HH{\mathbb{H}}%
\global\long\def\ZZ{\mathbb{Z}}%
\global\long\def\NN{\mathbb{N}}%
\global\long\def\KK{\mathbb{K}}%
\global\long\def\LL{\mathbb{L}}%
\global\long\def\RR{\mathbb{R}}%
\global\long\def\FF{\mathbb{F}}%
\global\long\def\oo{\mathcal{O}}%
\global\long\def\aa{\mathcal{A}}%
\global\long\def\bb{\mathcal{B}}%
\global\long\def\limfi#1#2{{\displaystyle \lim_{#1\to#2}}}%
\global\long\def\pp{\mathcal{P}}%
\global\long\def\qq{\mathcal{Q}}%
\global\long\def\da{\mathrm{da}}%
\global\long\def\dt{\mathrm{dt}}%
\global\long\def\dg{\mathrm{dg}}%
\global\long\def\ds{\mathrm{ds}}%
\global\long\def\dm{\mathrm{dm}}%
\global\long\def\dmu{\mathrm{d\mu}}%
\global\long\def\dx{\mathrm{dx}}%
\global\long\def\dy{\mathrm{dy}}%
\global\long\def\dz{\mathrm{dz}}%
\global\long\def\dnu{\mathrm{d\nu}}%
\global\long\def\flr#1{\left\lfloor #1\right\rfloor }%
\global\long\def\cg{\mathfrak{g}}%
\global\long\def\nuga{\nu_{\mathrm{Gauss}}}%
\global\long\def\diag#1{\mathrm{diag}\left(#1\right)}%
\global\long\def\bR{\mathbb{R}}%
\global\long\def\Ga{\Gamma}%
\global\long\def\PGL{\mathrm{PGL}}%
\global\long\def\GL{\mathrm{GL}}%
\global\long\def\SL{\mathrm{SL}}%
\global\long\def\SO{\mathrm{SO}}%
\global\long\def\sl{\mathrm{sl}}%
\global\long\def\mb#1{\mathrm{#1}}%
\global\long\def\wstar{\overset{w^{*}}{\longrightarrow}}%
\global\long\def\vphi{\varphi}%
\global\long\def\av#1{\left|#1\right|}%
\global\long\def\inv#1{\left(\mathbb{Z}/#1\mathbb{Z}\right)^{\times}}%
\global\long\def\cH{\mathcal{H}}%
\global\long\def\cM{\mathcal{M}}%
\global\long\def\bZ{\mathbb{Z}}%
\global\long\def\bA{\mathbb{A}}%
\global\long\def\bQ{\mathbb{Q}}%
\global\long\def\bP{\mathbb{P}}%
\global\long\def\eps{\epsilon}%
\global\long\def\on#1{\mathrm{#1}}%
\global\long\def\nuga{\nu_{\mathrm{Gauss}}}%
\global\long\def\idealeq{\trianglelefteq}%
\global\long\def\fidealeq{\underset{f.i.}{\trianglelefteq}}%
\global\long\def\set#1{\left\{  #1\right\}  }%
\global\long\def\smallmat#1{\begin{smallmatrix}#1\end{smallmatrix}}%
\global\long\def\len{\mathrm{len}}%
\global\long\def\mob{M{\"o}bius}%

\title{Local to global property in free groups}
\author{Ofir David}
\begin{abstract}
The local to global property for an equation $\psi$ over a group
$G$ asks to show that $\psi$ is solvable in $G$ if and only if
it is solvable in every finite quotient of $G$. In this paper we
focus show that in order to prove this local to global property for
free groups $G=F_{k}$, it is enough to prove for $k\leq$ the number
of parameters in $\psi$. In particular we use it to show that the
local to global property holds for $m$-powers in free groups.
\end{abstract}

\email{eofirdavid@gmail.com}
\maketitle

\section{Introduction}

An interesting concept that appears in many parts of mathematics is
the local to global principle. One of the most well known examples
is the problem of finding integer solutions to $x^{2}+y^{2}=p$ where
$p\in\ZZ$ is some prime. If there was an integer solution, then there
is a solution mod $n$ for every $n$. In particular we have a solution
mod $4$, and since the squares mod 4 are $0$ and $1$, this implies
that $p\equiv_{4}0,1,2$, so there is no solution if $p\equiv_{4}3$.
Similar results hold for other equations over $\ZZ$.

A natural question is if the converse holds as well - if there is
a family of solutions mod $n$ for all $n$, does it implies a solution
over $\ZZ$? In our example above, if $p\not\equiv_{4}3$ is a prime,
then it is well known that there is an integer solution to $x^{2}+y^{2}=p$
(so in this case, it is enough to find a solution mod $4$).\\

In this paper we study this local to global principle for groups with
respect to their finite quotients. Namely, if $G$ is a group and
$\psi$ is an equation over $G$ (defined below), is it true that
$\psi$ has a solution in $G$ if and only if it has a solution in
every finite quotient of $G$? For example, in the additive group
$\ZZ$, given $m,n\in\NN$, we may ask whether $n=\sum_{1}^{m}x=mx$
has an integer solution $x\in\ZZ$, and this can be easily seen to
be equivalent to whether $n=mx$ has a solution in the finite quotient
$\nicefrac{\ZZ}{m\ZZ}$.
\begin{defn}
Denote by $F_{n}=\left\langle x_{1},...,x_{n}\right\rangle $ the
free group on $n$ letters, and for a group $G$ denote by $F_{n}*G$
the free product. If $\varphi:F_{n}\to G$ is any homomorphism, we
will also denote by $\varphi:F_{n}*G\to G$ its natural extension
which is the identity on $G$.

An \emph{equation $\psi$ over $G$ }is simply an element $\psi\in F_{n}*G$.
We say that \emph{$\psi$ has a solution over $G$ }if there is some
homomorphism $\varphi\in Hom\left(F_{n},G\right)$ such that $\varphi\left(\psi\right)=e_{G}$. 

If $\pi:G\to H$ is a surjective homomorphism, then we say that $\psi$
has a solution in $H$ (with respect to $\pi$) if there is some $\varphi\in Hom\left(F_{n},G\right)$
such that $\pi\left(\varphi\left(\psi\right)\right)=e_{H}$.
\end{defn}

\newpage{}
\begin{example}
\begin{enumerate}
\item In the example before the definition we had the equation $\sum_{1}^{m}x-n$
where $F_{1}=\left\langle x\right\rangle $ as an additive group.
We can write the equation as an additive equation because $\ZZ$ is
abelian. More generally, for $g\in G$ we can consider the equation
$x^{m}g^{-1}$, which is solvable in $G$ exactly when $g$ is an
$m$-power.
\item If $G$ is any group and $g\in G$, we consider the equation $\psi\left(x,y\right)=\left[x,y\right]g^{-1}=xyx^{-1}y^{-1}g^{-1}$.
This equation has a solution over $G$ exactly if $g$ is a commutator.
If $G$ is any abelian group and $e\neq g\in G$, than $\psi$ doesn't
have a solution in $G$ - this is because every commutator in an abelian
group is trivial. However, if $\pi:G\to H$ is a projection such that
$\pi\left(g\right)=e_{H}$, then $\psi$ has a solution over $H$
because, for example, $\pi\left(\psi\left(e,e\right)\right)=\pi\left(g\right)=e_{H}$.
\item If $g,h\in G$, then the equation $xgx^{-1}h^{-1}\in\left\langle x\right\rangle *G$
is solvable if and only if $g$ and $h$ are conjugate to one another.
\end{enumerate}
\end{example}

\begin{rem}
We will mostly be interested in equations as in $(1)$ and $(2)$
above with the form $w=g$ with $w\in F_{n},\;g\in G$ (i.e. $x^{m}=g$
and $\left[x,y\right]=g$). However, some of the result here are true
in the more general definition and we prove it for them.
\end{rem}

\begin{defn}
Let $\psi$ be an equation over $G$. We say that $\psi$ has the
local to global property, if $\psi$ is solvable over $G$ (global
solution) if and only if it is solvable over $\nicefrac{G}{K}$ for
any $K\fidealeq G$ (local solutions).
\end{defn}

For a general group, there is no reason that local solutions will
imply a global solution. Indeed, there might be some $e\neq g\in G$
which is trivial in every finite quotient, so we can't even distinguish
it from $e$. But even if this is not the case and $G$ is residually
finite, then it might still not be enough, and we will need to work
with its profinite completion $\hat{G}$ (see Definitions \ref{def:residually_finite},
\ref{def:profinite_completion}). This completion is a topological
completion, and this topological approach let us use notations and
results from topology which contribute in both understanding better
these local to global problems and solve them.

In this paper we are mainly interested in the local to global principle
for free groups. In the two examples mentioned above this principle
was shown to hold, first in \cite{thompson_power_1997} by Thompson
(though the proof is by Lubotzky) for powers (namely $x^{m}=g$) and
then in \cite{khelif_finite_2004} by Khelif for commutators (namely
$\left[x,y\right]=g$). In this paper we give another proof to a more
generalized form of a reduction step appearing in \cite{khelif_finite_2004}
for equations over free groups. 
\begin{thm}
Let $w\in F_{n}$ be a word on $n$ variables. Then $wg^{-1}$ has
the local to global property for any $g\in G=F_{k}$ and any $k\geq0$
, if this claim is true for $0\leq k\leq n$.
\end{thm}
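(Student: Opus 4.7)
The plan is to reduce the property for a general $F_k$ to the case of $F_\ell$ with $\ell \leq n$ by ``trimming'' the ambient group around the parameter $g$. The key observation is that $g \in F_k$ uses only finitely many generators, say $\ell = \ell(g)$, in its reduced form, so that $g$ lies in the standard rank-$\ell$ free factor $F_\ell \leq F_k$ spanned by those letters. I would first establish that the local-to-global property for $wg^{-1}$ over $F_k$ is equivalent to the same property over $F_\ell$, and then invoke the hypothesis when $\ell \leq n$.

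To make the reduction precise, consider the retraction $\rho\colon F_k \to F_\ell$ sending unused generators to the identity, so that $\rho(g) = g$. For global solvability, any solution $\varphi\colon F_n \to F_k$ of $\varphi(w) = g$ descends via $\rho$ to the solution $\rho \circ \varphi\colon F_n \to F_\ell$, while the inclusion $F_\ell \hookrightarrow F_k$ sends solutions in $F_\ell$ back to $F_k$. For local solvability, both directions are similar: given $N' \fidealeq F_\ell$, the pullback $\rho^{-1}(N') \fidealeq F_k$ has finite index, and any solution modulo $\rho^{-1}(N')$ in $F_k$ descends via $\rho$ to a solution modulo $N'$ in $F_\ell$; conversely, given $N \fidealeq F_k$, the intersection $N \cap F_\ell \fidealeq F_\ell$ has finite index, and any solution modulo $N \cap F_\ell$ in $F_\ell$ composes with the inclusion to yield a solution modulo $N$ in $F_k$. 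Thus the local-to-global property for $(w,g)$ is the same over $F_k$ and over $F_\ell$, and when $\ell \leq n$ the hypothesis concludes the proof.

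The main obstacle is the case $\ell > n$: the retraction above cannot further reduce the rank, since $g$ already involves every generator of $F_\ell$. Here the plan is to invoke the compactness of $Hom(\hat F_n, \hat F_\ell) \cong \hat F_\ell^n$: local solvability in $F_\ell$ produces, as an inverse limit, a profinite homomorphism $\hat\varphi\colon \hat F_n \to \hat F_\ell$ satisfying $\hat\varphi(w) = g$. The image of $\hat\varphi$ generates a topologically $n$-generated closed subgroup $\hat H \leq \hat F_\ell$ containing $g$. The goal is then to extract a concrete rank-$\leq n$ subgroup $H \leq F_\ell$ with $g \in H$ in which $wg^{-1}$ is still locally solvable, so that the hypothesis applied to $H \cong F_r$ for some $r \leq n$ provides a global solution in $F_\ell$, and hence in $F_k$. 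The hardest step will be this passage from the topologically $n$-generated profinite subgroup $\hat H$ to a concretely finitely generated rank-$\leq n$ subgroup of $F_\ell$ in which local solvability is preserved; this is where I would expect to need either the projectivity of closed subgroups of free profinite groups or a more direct combinatorial construction specific to $g$.
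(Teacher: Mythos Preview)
Your first reduction---retracting $F_k$ onto the free factor $F_\ell$ spanned by the letters appearing in $g$---is correct as stated, but it is also unnecessary: the paper's argument works uniformly in $k$ and never uses this step. More importantly, the case $\ell>n$ is not a corner case but the generic situation, so this reduction does not save any real work.

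The second half of your proposal correctly identifies the strategy the paper uses: pass to a profinite solution, look at the closed subgroup $\hat H=\overline{\langle \hat g_1,\dots,\hat g_n\rangle}$ it generates, and descend to a discrete rank-$\le n$ subgroup containing $g$. The genuine gap is that you stop exactly at the hard step. You write that you ``would expect to need either the projectivity of closed subgroups of free profinite groups or a more direct combinatorial construction,'' but supply neither. Without this, the argument is only a plan.

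The paper's mechanism for this step is concrete and combinatorial. One writes $\hat H=\bigcap_{N\in J}\overline N$ where $J$ is the directed system of finite-index subgroups $N\le F_k$ with $\hat H\le\overline N$; in particular $g\in\bigcap_{N\in J}N$. A Stallings-graph argument (the paper's Theorem on free-factor construction) then produces a finitely generated $H_0$ with $g\in H_0\le\bigcap N$ such that $H_0$ is a \emph{free factor} of some specific $N_0\in J$. This free-factor property is the crux: it gives a retraction $\pi\colon N_0\to H_0$ fixing $H_0$, which extends to $\hat\pi\colon\overline{N_0}\to\overline{H_0}$. Since $\hat H\le\overline{N_0}$, applying $\hat\pi$ to the profinite solution yields a solution to $w=g$ in $\overline{H_0}$, and surjectivity of $\hat\pi\mid_{\hat H}$ forces $\overline{H_0}$ to be topologically $n$-generated, hence $H_0\cong F_{k'}$ with $k'\le n$. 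One also needs that the subspace topology on $H_0$ (and on $N_0$) coincides with its own profinite topology, so that $\overline{H_0}\cong\hat H_0$; this holds precisely because $N_0$ has finite index and $H_0$ is a free factor of $N_0$.

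Your suggestion to use profinite projectivity is plausible in spirit, but to make it work you would still need to produce the retraction onto a subgroup of the \emph{abstract} free group (not just a free profinite subgroup), and to control the induced topology so that solvability in the closure transfers back to local solvability in the discrete subgroup. The Stallings-graph construction accomplishes both at once.
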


Actually, the theorem above will be slightly stronger - it is enough
to prove the local to global property if we further assume that for
any finite quotient $\nicefrac{G}{K}$, the solution $h_{1},...,h_{k}$
to the equation $w=g$ also generate $\nicefrac{G}{K}$.

This reduction together with the discussion above about the equation
$n=mx$ over $\ZZ$ produce another, very simple, proof for the local
to global principle for the equation $x^{m}=g$ over free groups.
\begin{thm}
\label{thm:main_theorem}Let $G=F_{k}$ be the free group on $k$
variables. Then for any $g\in G$, we can write $g=h^{m}$ for some
$h\in G$, if and only if in any finite quotient $\nicefrac{G}{K}$
of $G$ we can find $h_{K}\in G$ such that $gK=h_{K}^{m}K$.
\end{thm}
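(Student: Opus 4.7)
The plan is to apply the reduction theorem just stated to the word $w = x^{m} \in F_{1}$. With $n = 1$, the reduction theorem says that the local to global property for $x^{m} g^{-1}$ holds over every free group $F_{k}$ once it is established for $k = 0$ and $k = 1$.

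The case $k = 0$ is immediate, since $G = \{e\}$ has only itself as a finite quotient. The substantive base case is $k = 1$, where $G = F_{1}$ is written additively as $\ZZ$, and the multiplicative equation $x^{m} = g$ becomes the additive equation $m x = g$ with $x, g \in \ZZ$. I would handle this exactly as in the worked example from the introduction: if $m x = g$ has a solution in every finite quotient $\ZZ / N \ZZ$, then applying this with $N = m$ forces $g \equiv 0$ mod $m$, and hence $x = g / m \in \ZZ$ is a global solution.

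The main obstacle is not in the base case itself, which is one line of divisibility, but in correctly invoking the reduction theorem. One must keep the rank $k$ of the ambient free group distinct from the number $n = 1$ of variables in $w$, and one should check that the slightly stronger form of the reduction — where the local solutions are additionally required to generate the finite quotient — is satisfied. For $k = 1$ and $G = \ZZ$, this stronger form merely asks that each local solution $x_{N}$ be a generator of $\ZZ / N \ZZ$; but the divisibility argument above uses only the \emph{existence} of some solution modulo $m$, so the extra requirement does not complicate the base case. In summary, the entire proof for $F_{k}$ is absorbed into the reduction theorem, and the remaining content is the elementary fact that a solution modulo $m$ forces $m \mid g$ in $\ZZ$.
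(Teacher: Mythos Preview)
Your proposal is correct and follows the same route as the paper: invoke the reduction theorem with $w=x^{m}\in F_{1}$ (so $n=1$), which reduces to the base cases $k=0$ (trivial) and $k=1$ (the $\ZZ$ divisibility argument from the introduction). One small remark: the ``stronger form'' of the reduction gives you an extra \emph{hypothesis} in the base case (that the local solutions generate the quotient), not an extra obligation; since you already solve $k=1$ without it, there is nothing to check and that paragraph can simply be dropped.
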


\newpage{}

In \secref{Profinite-topology} we will begin by recalling the main
definitions and results for profinite groups and their completions,
and in particular how to interpret solution of equations in a topological
language. 

After giving a sketch of the idea of the reduction step in \subsecref{The-reduction-idea},
we recall the definition of Stallings graphs in \subsecref{Stallings-graphs}
which is a very useful topological tool to study free groups, and
finally in \subsecref{Proof-main} we give the proof of the reduction
step, and its application to $m$-powers.

For the reader convenience, we added \secref{Some-profinite-results}
where we give proofs for some of the well known results about profinite
groups that we used in the reduction step.

\subsection{Acknowledgments}

I would like to thank Michael Chapman for introducing me to these
local to global questions for free groups, and asking me to give a
lecture about them in a Stallings' graphs seminar that he and Noam
Kolodner organized at the Hebrew university. Here is hoping that next
time I will not have to read a paper that was never meant to be read.

The research leading to these results has received funding from the
European Research Council under the European Union Seventh Framework
Programme (FP/2007-2013) / ERC Grant Agreement n. 335989.

\section{\label{sec:Profinite-topology}Profinite topology and completion
- motivation}

Let us fix some group $G$ and an equation $\psi\in F_{n}*G$. As
we mentioned before, if $\psi$ has a solution in $G$, then it has
a solution in every quotient (and in particular finite quotients)
of $G$. We now try to understand what we need for the inverse direction
to hold as well. 
\begin{defn}
Let $F_{n}$ be the free group generated by $x_{1},...,x_{n}$. Given
$\bar{g}=\left(g_{1},...,g_{n}\right)\in G^{n}$ we let $\varphi_{\bar{g}}:F_{n}\to G$
be the homomorphism defined by $\varphi_{\bar{g}}\left(x_{i}\right)=g_{i}$
Note that $\varphi_{\bar{g}}$ depends on our choice of the basis
$x_{1},...,x_{n}$ for $F_{n}$, but once this basis is chosen, every
homomorphism can be written uniquely as $\varphi_{\bar{g}}$, so we
can identify $Hom\left(F_{n},G\right)$ with $G^{n}$. We will also
use the same notation for the extension $\varphi_{\bar{g}}:F_{n}*G\to G$
which is the identity on $G$. Finally, for $\psi\in F_{n}*G$ we
will write
\[
\psi\left(\bar{g}\right):=\varphi_{\bar{g}}\left(\psi\right).
\]
\end{defn}

\begin{example}
Fix some group $G$, and element $g\in G$ and let $\psi=\left[x,y\right]g^{-1}\in F_{2}*G$.
If $g_{1},g_{2}\in G$, then our two notations will produce 
\begin{align*}
\varphi_{\left(g_{1},g_{2}\right)}\left(\psi\right) & =\psi\left(g_{1},g_{2}\right)=\left[g_{1},g_{2}\right]g^{-1}.
\end{align*}
\end{example}

We can now talk about the set of solutions in $G^{n}$ for this equation.
\begin{defn}
Given any normal subgroup $K\idealeq G$ we write 
\[
\Omega_{\psi,K}=\left\{ \bar{g}\in G^{n}\;\mid\;\psi\left(\bar{g}\right)\in K\right\} .
\]
In other words, these are all the tuples which solve the equation
$\psi$, after projecting to $\nicefrac{G}{K}$. We will also write
$\Omega_{\psi}:=\Omega_{\psi,\left\{ e\right\} }$ which are the actual
solutions for $\psi$ in $G$.
\end{defn}

We should think of $\Omega_{\psi,K}$ as approximate solutions - they
might not solve the original equation, but up to an ``error'' in
$K$ they do. Clearly, if $K_{1}\leq K_{2}$ are normal in $G$, then
$\Omega_{\psi,K_{1}}\subseteq\Omega_{\psi,K_{2}}$, so we have better
and better solutions as we decrease $K$. 

If we run over the finite index normal subgroups $K$, then we get
that 
\[
\Omega_{\psi}\subseteq\bigcap_{K\underset{f.i}{\idealeq}G}\Omega_{\psi,K}.
\]

\newpage This is equivalent to our statement from before that a global
solution in $G$ implies a local solution in every finite quotient.
Assume now that we have a solution in every finite quotient, namely
the $\Omega_{\psi,K}$ are not empty, and we want to find a global
solution, namely $\Omega_{\psi}\neq\emptyset$. 

This will follow if we can prove the next two properties:
\begin{enumerate}
\item There is an equality $\Omega_{\psi}=\bigcap_{K\underset{f.i}{\idealeq}G}\Omega_{\psi,K}$,
and
\item The intersection $\bigcap_{K\underset{f.i}{\idealeq}G}\Omega_{\psi,K}$
is nonempty.
\end{enumerate}
For the first part, suppose that $\bar{g}\in\bigcap_{K\underset{f.i}{\idealeq}G}\Omega_{\psi,K}$,
or equivalently the value $\psi\left(\bar{g}\right)\in\bigcap K$.
If we knew that the intersection of all finite index normal subgroups
is trivial, then it means that $\psi\left(\bar{g}\right)=e$, and
therefore $\bar{g}$ is a solution in $G$ for $\psi$. This leads
to the following definition:
\begin{defn}
\label{def:residually_finite}A group $G$ is called residually finite,
if for any $e\neq g\in G$ there is a projection $\pi:G\to H$ to
some finite group $H$ such that $\pi\left(g\right)\neq e$. Equivalently
the intersection of all finite index subgroups (resp. f.i. normal
subgroups) is trivial.
\end{defn}

\begin{example}
\begin{enumerate}
\item The group $\ZZ$ is residually finite. If $n\neq0$, then its reduction
modulo $2n$ is non trivial.
\item Similarly the group $\SL_{k}\left(\ZZ\right)$ for $k\geq2$ is residually
finite by considering the maps $\SL_{k}\left(\ZZ\right)\to\SL_{k}\left(\nicefrac{\ZZ}{p\ZZ}\right)$
for primes $p$.
\item Subgroups of residually finite groups are also residually finite.
Since the free group $F_{2}\cong\left\langle \left(\begin{array}{cc}
1 & 0\\
2 & 1
\end{array}\right),\left(\begin{array}{cc}
1 & 2\\
0 & 1
\end{array}\right)\right\rangle \leq\SL_{2}\left(\ZZ\right)$ we get that it is residually finite. Moreover, every free group $F_{k}$
can be embedded in $F_{2}$, so all the free groups are residually
finite.
\end{enumerate}
\end{example}

If $G$ is residually finite, then the first condition above holds.
For the second condition, note that if $K_{j}\fidealeq G,\;k=1,...,k$,
then so is $\bigcap_{1}^{k}K_{j}\fidealeq G$. It follows that $\Omega_{\psi,\bigcap_{1}^{k}K_{j}}\subseteq\bigcap_{1}^{k}\Omega_{\psi,K_{j}}$,
and under our assumption that there are always local solutions for
every $K\fidealeq G$, we conclude that any finite intersection $\bigcap_{1}^{k}\Omega_{\psi,K_{j}}$
is nonempty. If we can give a topology to $G$ where the $\Omega_{\psi,K}$
are compact, then the finite intersection property for compact sets
implies that $\bigcap_{K\underset{f.i}{\idealeq}G}\Omega_{\psi,K}\neq\emptyset$
which is the second condition above.

In general, this might not be the case, however there is a natural
topology on $G$, called the profinite topology, and then completing
$G$ with respect to this topology will have this property.
\begin{defn}
\label{def:profinite_completion}Let $G$ be a residually finite group,
and let $\left\{ \left(\pi_{i},H_{i}\right)\;\mid\;i\in I\right\} $
be all the pairs such that $\pi_{i}:G\to H_{i}$ is a projection to
a finite group $H_{i}$. Let $H_{i}$ have the discrete topology and
$\prod_{i\in I}H_{i}$ have the product topology and identify $G$
as a subgroup via $g\mapsto\left(\pi_{i}\left(g\right)\right)_{i\in I}$
(this is injective because $G$ is residually finite). Then the induced
topology on $G$ is called the profinite topology, and the closure
of $G$ in $\prod_{i\in I}H_{i}$ , denoted by $\hat{G}$, is called
the profinite completion of $G$. For each $i\in I$, we will denote
by $\hat{\pi}_{i}:\hat{G}\to H_{i}$ the projection of $\hat{G}$
to $H_{i}$ which is an extension of $\pi_{i}$.
\end{defn}

\newpage{}

In the profinite topology, the cosets of finite index subgroups form
a basis for the topology. Thus, we should think of two elements as
``close'' if they are mapped to the same element under the projection
$G\to\nicefrac{G}{K}$ where $\left|\nicefrac{G}{K}\right|$ is large.
Alternatively, the profinite topology is the weakest topology such
that all the projections to the (discrete) finite groups are continuous.
For more details about residually finite groups and their completion,
the reader is referred to \cite{ceccherini-silberstein_cellular_2010,wilson_profinite_1998,ribes_profinite_2010}. 

For our discussion, one of the main results that we need is the following:
\begin{thm}
Let $G$ be a residually finite group and $\hat{G}$ its profinite
completion. Then $\hat{G}$ is a totally disconnected, Hausdorff and
compact topological group which is also residually finite.

\end{thm}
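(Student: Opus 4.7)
The plan is to view $\hat G$ simply as a closed subgroup of the topological group $P:=\prod_{i\in I}H_i$ and then inherit all four structural properties from $P$, using standard facts about products of finite discrete spaces.

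First, I would verify that $\hat G$ is a topological group. Each $H_i$ is a (finite, discrete) topological group, so coordinate-wise multiplication and inversion make $P$ a topological group. The image of $G$ under the diagonal map $g\mapsto(\pi_i(g))_{i\in I}$ is a subgroup of $P$; the standard continuity-of-operations argument (if $m:P\times P\to P$ is continuous then $m(\overline{A}\times\overline{A})\subseteq\overline{m(A\times A)}$, and similarly for inversion) shows that the closure $\hat G$ is again a subgroup. Equipped with the subspace topology, it is therefore a topological group.

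Next, for the three topological properties: each $H_i$ with the discrete topology is Hausdorff, compact, and totally disconnected. Hausdorffness and total disconnectedness pass to arbitrary products (the first is a routine separation argument on any two distinguishing coordinates; the second follows because projections of a connected set to each $H_i$ must be singletons, forcing the set itself to be a singleton). Compactness of $P$ is Tychonoff's theorem. Each of these three properties is inherited by any subspace (for compactness one uses that $\hat G$ is by definition a \emph{closed} subset of the compact Hausdorff space $P$). This takes care of Hausdorff, compact, and totally disconnected.

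Finally, I would check residual finiteness of $\hat G$ using the maps $\hat\pi_i:\hat G\to H_i$ obtained by restricting the coordinate projections $P\to H_i$. These are continuous homomorphisms onto finite groups, so their kernels are finite-index normal subgroups of $\hat G$. If $e\neq \hat g\in\hat G$, then as an element of $P$ some coordinate $\hat g_i$ is non-trivial, i.e.\ $\hat\pi_i(\hat g)\neq e_{H_i}$; hence $\hat g$ survives in the finite quotient $H_i$, proving residual finiteness.

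I do not expect any real obstacle here: the entire statement is bookkeeping on the definition of $\hat G$ as a closed subgroup of $\prod_i H_i$, and the only non-trivial external input is Tychonoff's theorem for the compactness assertion. The only place a beginner might slip is in justifying that the closure of a subgroup of a topological group is again a subgroup, which is the small continuity argument sketched above.
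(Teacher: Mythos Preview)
Your proposal is correct and follows essentially the same route as the paper: both arguments observe that finite discrete groups are totally disconnected, Hausdorff, and compact, pass these properties to the product $P=\prod_i H_i$ (invoking Tychonoff for compactness), and then inherit them to the closed subgroup $\hat G$, with residual finiteness coming from the coordinate projections (the paper phrases this last step as ``subgroup of a residually finite group is residually finite''). You supply a bit more detail than the paper---notably the verification that the closure of a subgroup is again a subgroup---but the strategy is the same.
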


\begin{proof}
The finite groups with discrete topologies are totally disconnected,
Hausdorff and compact groups. It is easily seen that the product of
such sets is totally disconnected and Hausdorff, and by Tychonoff's
theorem it is also compact. Since by definition $\hat{G}$ is closed
in a product of finite groups, we conclude that it is also totally
disconnected, Hausdorff and compact. Finally, product of finite group
is always residually finite, and therefore $\hat{G}$ is residually
finite as a subgroup of a residually finite group.
\end{proof}
We already see the usefulness of this topological approach, and in
particular Tychonoff's theorem above shows that $\hat{G}$ is compact.

The profinite topology is defined in such a way that if $K\fidealeq G$,
then the natural map $G\to\nicefrac{G}{K}$ is continuous, so the
induced map $\psi_{K}:G^{n}\overset{\psi}{\to}G\to\nicefrac{G}{K}$
is also continuous, and therefore $\Omega_{\psi,K}=\psi_{K}^{-1}\left(eK\right)$
is closed in $G^{n}$. Furthermore, their closures $\hat{\Omega}_{\psi,K}$
in $\hat{G}$ are compact so we can use their finite intersection
property in $\hat{G}$.
\begin{cor}
Let $G$ be a residually finite group, and let $\psi$ be an equation
over $G$. Then $\psi$ has a solution in $\hat{G}$, if and only
if it has a solution in every finite quotient of $G$.
\end{cor}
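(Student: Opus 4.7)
The plan is to prove the equivalence in two directions. The forward direction is essentially tautological. If $\bar{g}\in \hat{G}^{n}$ is a solution, so that $\psi(\bar{g})=e_{\hat{G}}$, then for any $K\fidealeq G$ the continuity of $\hat{\pi}_{K}:\hat{G}\to\nicefrac{G}{K}$ forces $\hat{\pi}_{K}(\psi(\bar{g}))=e$. Using surjectivity of $G\to\nicefrac{G}{K}$, I can lift $\hat{\pi}_{K}(\bar{g})\in (\nicefrac{G}{K})^{n}$ coordinate-wise to some $\bar{h}\in G^{n}$; this $\bar{h}$ then lies in $\Omega_{\psi,K}$, giving a local solution.

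For the backward direction, I will mirror the $G^{n}$-level discussion that precedes the corollary, but work inside the compact space $\hat{G}^{n}$. For each $K\fidealeq G$, define
\[
\hat{\Omega}_{\psi,K}\;=\;\left\{ \bar{g}\in\hat{G}^{n}\;\bigm|\;\hat{\pi}_{K}(\psi(\bar{g}))=e\right\} ,
\]
the preimage of $e$ under the continuous composition $\hat{G}^{n}\xrightarrow{\psi}\hat{G}\xrightarrow{\hat{\pi}_{K}}\nicefrac{G}{K}$. Since $\nicefrac{G}{K}$ is discrete and $\hat{G}^{n}$ is compact, each $\hat{\Omega}_{\psi,K}$ is a closed, compact subset of $\hat{G}^{n}$. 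I would then verify the following three points in order: first, each $\hat{\Omega}_{\psi,K}$ is nonempty, since by hypothesis there is a local solution $\bar{h}\in\Omega_{\psi,K}\subseteq G^{n}\subseteq\hat{G}^{n}$, and this $\bar{h}$ trivially lies in $\hat{\Omega}_{\psi,K}$; second, the family $\{\hat{\Omega}_{\psi,K}\}_{K\fidealeq G}$ has the finite intersection property, because for any $K_{1},\dots,K_{m}\fidealeq G$ the intersection $K=\bigcap_{j}K_{j}$ is again a finite-index normal subgroup, and the containment $K\subseteq K_{j}$ yields $\hat{\Omega}_{\psi,K}\subseteq\bigcap_{j}\hat{\Omega}_{\psi,K_{j}}$; third, by compactness of $\hat{G}^{n}$ the total intersection $\bigcap_{K\fidealeq G}\hat{\Omega}_{\psi,K}$ is nonempty.

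It then remains to identify any $\bar{g}$ in this total intersection as an actual solution in $\hat{G}$, i.e.\ to upgrade $\hat{\pi}_{K}(\psi(\bar{g}))=e$ for every $K$ to $\psi(\bar{g})=e_{\hat{G}}$. This is the only place that uses something beyond general topology: it relies on the fact that the family of projections $\{\hat{\pi}_{K}:K\fidealeq G\}$ separates points in $\hat{G}$, which is built into the definition of $\hat{G}$ as the closure of $G$ inside $\prod_{i\in I}H_{i}$ (an element of the product is trivial iff each coordinate is trivial). I expect this last step to be the main subtle point of the argument, although it reduces to a definitional check rather than a genuine obstacle; the rest is a standard application of the finite intersection property to a descending family of compact subsets of a compact Hausdorff group.
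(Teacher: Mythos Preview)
Your proposal is correct and follows essentially the same approach as the paper: compactness of $\hat{G}^{n}$ together with the finite intersection property for the direction ``local solutions $\Rightarrow$ solution in $\hat{G}$'', and the extended projections $\hat{\pi}_{K}$ for the converse. The only cosmetic difference is that you define $\hat{\Omega}_{\psi,K}$ directly as a preimage in $\hat{G}^{n}$, whereas the paper speaks of the closures of $\Omega_{\psi,K}$; your version is arguably cleaner, and you also spell out the lifting step (from $(G/K)^{n}$ back to $G^{n}$) that the paper leaves implicit.
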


\begin{proof}
The first part, namely local solutions imply a solution in $\hat{G}$
was presented above, and it is left to the reader to fill in the details.
For the other direction, if $K\fidealeq G$, then the map $\pi:G\to\nicefrac{G}{K}$
can be extended to $\hat{\pi}:\hat{G}\to\nicefrac{G}{K}$, so a solution
in $\hat{G}$ implies a local solution for every such $K$.
\end{proof}
Given an equation $\psi$ over $G$, it is also an equation over $\hat{G}$,
so by the corollary above, the set of solutions there $\hat{\Omega}_{\psi}$
is not empty exactly if there is a solution in every finite quotient
of $G$. This let us talk about the local to global principal in a
more ``compact'' way. Namely, the local to global principal holds
exactly if a solution in $\hat{G}$ implies a solution in $G$.

\newpage{}

\section{The reduction}

\subsection{\label{subsec:The-reduction-idea}The reduction idea}

Now that we have our new topological notation, let us sketch the main
idea of the reduction step, where as example we consider the equation
$x^{m}=g$.
\begin{enumerate}
\item Fix some $g\in G=F_{k}$ and assume that $g$ is an $m$-power in
every finite quotient of $F_{k}$ or equivalently there is some $h\in\hat{G}$
such that $h^{m}=g$. Letting $H=\overline{\left\langle h\right\rangle }\leq\hat{G}$
, assume first that there is some $g\in H_{0}\leq F_{k}$ finitely
generated such that $\widehat{H}_{0}\cong\overline{H_{0}}=H$, namely
\[
\xymatrix{ & \overline{H}_{0}=\overline{\left\langle h\right\rangle }\ar[r] & \hat{F}_{k}\\
\left\langle g\right\rangle \ar[r] & H_{0}\ar[r]\ar[u] & F_{k}.\ar[u]
}
\]
The group $H_{0}$ is free as a subgroup of a free group, and therefore
$H_{0}\cong F_{k'}$ for some $k'$ (it is also finitely generated).
On the other hand, its completion $\hat{H}_{0}\cong\overline{\left\langle h\right\rangle }$
is generated topologically by one element $h$, so we must have that
$k'=1$. In other words, this reduces the problem to $H_{0}\cong\ZZ$
and $\hat{H}_{0}\cong\hat{\ZZ}$ where we already know the local to
global principle for $m$-powers (or additively - multiples of $m$).
\item However, the conditions above are not true in general. In order to
fix this problem, let $H_{0}$ be a finitely generated subgroup such
that $g\in H_{0}$ and $\overline{H_{0}}\leq H$. Suppose that we
can find a surjective continuous map $\pi:H\to\overline{H_{0}}$ which
fixes $\overline{H_{0}}$, so we have the diagram
\[
\xymatrix{ & \overline{H}_{0}\ar[r]_{\iota} & H\ar[r]\ar@(ul,ur)[l]_{\pi} & \hat{F}_{k}\\
\left\langle g\right\rangle \ar[r] & H_{0}\ar[rr]\ar[u] &  & F_{k},\ar[u]
}
\]
where $\pi\circ\iota=Id_{\overline{N}}$. In this case we get that
(1) $\overline{H_{0}}$ will be generated topologically by $\pi\left(h\right)$
and $(2)$ we have that $\pi\left(h\right)^{m}=\pi\left(h^{m}\right)=\pi\left(g\right)=g$
because $g\in\overline{H_{0}}$ is fixed by $\pi$. We can now use
the trick from above for $\left\langle g\right\rangle \leq H_{0}\leq\overline{H_{0}}\leq\hat{G}$.\\
As we shall see, this result will be more general, and not only for
the $m$-power equations. The interpretation of this result will be
that if $w\in F_{n}$ is any word (e.g. $w=x^{m},\left[x,y\right]$
etc.) and we want to prove the local to global property for it, namely
that a solution to $w\left(\hat{g}_{1},...,\hat{g}_{n}\right)=g$
in $\hat{G}$ implies a solution in $G$, it is enough to prove this
claim under the further assumption that $\hat{g}_{1},...,\hat{g}_{n}$
generate $\hat{F}_{k}$, so in particular $k\leq n$. In the finite
quotients language, it means that the solution $\hat{g}_{1}K,...,\hat{g}_{n}K$
generate the group $\nicefrac{G}{K}$ for every $K\fidealeq G$. 
\item The two main problems in (2) above are to somehow define a homomorphism
from $H$ to $\overline{H_{0}}$ and moreover it needs to fix $\overline{H_{0}}$.
However, there is one such situation where this is very easy. Suppose
that $H_{0}$ is a free factor $H_{0}\leq_{*}N$ of some group $N\leq F_{k}$,
namely we can write $N=H_{0}*H_{1}$ for some subgroup $H_{1}$. Then
there is a natural projection $\pi:N\to H_{0}$ which fixes $H_{0}$.
We can then extend it to their completion $\overline{\pi}:\overline{N}\to\overline{H}_{0}$,
and if $H\leq\overline{N}$ then the restriction of $\overline{\pi}$
to $H$ will do the job
\[
\xymatrix{ & \overline{H}_{0}\ar[r]_{\iota} & H\ar[r] & \overline{N}\ar[r]\ar@(ul,ur)[ll]_{\pi} & \hat{F}_{k}\\
\left\langle g\right\rangle \ar[r] & H_{0}\ar[rr]\ar[u] &  & N\ar[r]\ar[u] & F_{k}.\ar[u]
}
\]
\end{enumerate}
The main result of the next section will be to show that we can actually
choose such $H_{0}$ and $N$ ``wisely'' so that $H_{0}\leq_{*}N$
and $H\leq\overline{N}$.

\subsection{\label{subsec:Stallings-graphs}Stallings graphs}

The Stallings graphs will be our main tool to understand subgroups
of the free group, and to say when one subgroup is a free factor of
another. Before we define them, consider the following example of
a labeled graph (defined below).

\begin{figure}[H]
\begin{centering}
\includegraphics[scale=0.5]{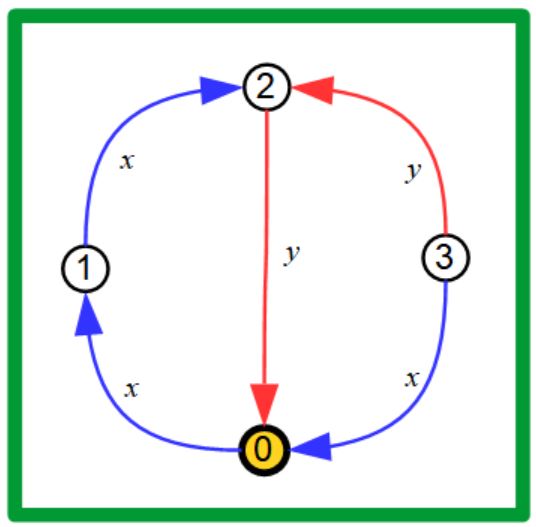}
\par\end{centering}
\caption{\label{fig:first_example} A labeled graph.}
\end{figure}
This graph has two ``main'' cycles corresponding to $x^{2}y$ on
the left and $x^{-1}y^{2}$ on the right, and every other cycle can
be constructed using these two cycles (up to homotopy, i.e. modulo
backtracking). Furthermore, the labeling allows us to think of these
cycles as elements in $F_{2}=\left\langle x,y\right\rangle $, so
that the fundamental group of the graph could be considered as the
subgroup generated by $x^{2}y$ and $x^{-1}y^{2}$. With this example
in mind, we now give the proper definitions to make this argument
more precise.\\

One of the most basic results in algebraic topology is that the fundamental
group of a graph is always a free group. Let us recall some of the
details.
\begin{defn}[Cycle basis]
\label{def:Cycle_basis}Let $\Gamma$ be a connected undirected graph
with a special vertex $v\in V\left(\Gamma\right)$, and let $T\subseteq E\left(\Gamma\right)$
be a spanning tree. For each edge $e:u\to w$ let $C_{e}$ be the
simple cycle going from $v$ to $u$ on the unique path in the tree
$T$, then from $u$ to $w$ via $e$ and finally from $w$ to $v$
via $T$. We denote by $C\left(T\right)=\left\{ e\notin T\;\mid\;C_{e}\right\} $
this collection of cycles. 
\end{defn}

It is not hard to show that any cycle in a connected graph can be
written as a concatenation of cycles in $C\left(T\right)$ and their
inverses as elements in the fundamental group $\pi_{1}\left(\Gamma\right)$
(namely, we are allowed to remove backtracking). More over, it has
a unique such presentation which leads to the following:
\begin{cor}
\label{cor:cycle_basis}Let $\Gamma$ be a graph and $T$ a spanning
tree. Then $C\left(T\right)$ is a basis for $\pi_{1}\left(\Gamma\right)$
which is a free group on $\left|E\left(\Gamma\right)\right|-\left|V\left(\Gamma\right)\right|+1$
elements. 
\end{cor}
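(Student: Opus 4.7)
The plan is to deduce this from the standard topological computation of the fundamental group of a graph. First I would note that since $T$ is a tree it is contractible, so the quotient map $q \colon \Gamma \to \Gamma/T$ is a homotopy equivalence (the standard fact that collapsing a contractible subcomplex of a CW complex yields a homotopy equivalence) and hence induces an isomorphism $q_* \colon \pi_1(\Gamma,v) \to \pi_1(\Gamma/T, q(v))$. The quotient $\Gamma/T$ is a wedge of circles, one for each edge in $E(\Gamma) \setminus T$, and by van Kampen its fundamental group is the free group on the loops corresponding to these edges.

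Next I would identify the cycles $C_e$ with these natural generators. For each $e \notin T$ the cycle $C_e$ consists of a path in $T$ from $v$ to the tail of $e$, the edge $e$ itself, and a path in $T$ back to $v$. Under $q$ the two tree portions collapse to the basepoint, so $q(C_e)$ is exactly the loop associated to $e$ in the wedge $\Gamma/T$. Hence $q_*$ carries $C(T)$ bijectively onto the free basis of $\pi_1(\Gamma/T)$, so $C(T)$ is itself a free basis of $\pi_1(\Gamma)$.

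For the rank formula, since $T$ spans $\Gamma$ it has $|V(\Gamma)|$ vertices, and a tree on $n$ vertices has exactly $n-1$ edges, so $|T| = |V(\Gamma)| - 1$. Therefore $|C(T)| = |E(\Gamma)| - |T| = |E(\Gamma)| - |V(\Gamma)| + 1$, giving the claimed rank.

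Given that the paragraph preceding the corollary already asserts both generation and unique representation by elements of $C(T)$, there is no serious obstacle here; the task is mainly to organize these facts. If one preferred to avoid invoking the topological homotopy-equivalence input, one could instead establish uniqueness combinatorially by showing that a reduced edge-path represents the trivial element only when its associated word in $C(T)^{\pm 1}$ is empty, using that no edge of $T$ appears with non-zero signed multiplicity in any single $C_e$, which forces any cancellation to come from matched $C_e$ and $C_e^{-1}$ pairs. Either route is elementary; the topological one is cleaner and is what I would use.
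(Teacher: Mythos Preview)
Your argument is correct. Note, however, that the paper does not supply an explicit proof of this corollary at all: it is stated as an immediate consequence of the sentence preceding it, which asserts (without proof) that every cycle has a unique presentation as a concatenation of elements of $C(T)$ and their inverses. So the paper's ``proof'' is essentially: generation plus uniqueness gives a free basis, and the rank is the number of edges outside $T$.

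Your route via the homotopy equivalence $\Gamma \to \Gamma/T$ and van Kampen is a genuinely different packaging of the same content. It has the advantage of reducing the uniqueness claim to the well-known computation of $\pi_1$ of a wedge of circles, rather than verifying the combinatorial cancellation directly. The paper's implicit approach stays entirely within the combinatorics of edge-paths, which fits better with the hands-on Stallings-graph style of the surrounding section but requires the reader to accept the unique-presentation claim on faith. Your final paragraph correctly identifies this: either the topological argument or the combinatorial one works, and you have spelled out the former while the paper gestures at the latter. The rank count is identical in both.
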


\begin{example}
In \figref{first_example} the edge touching the 0 vertex form a spanning
tree, and then $C_{\left(1,2\right)}=0\overset{x}{\to}1\overset{x}{\to}2\overset{y}{\to}0$
and $C_{\left(3,2\right)}=0\overset{x}{\leftarrow}3\overset{y}{\to}2\overset{y}{\to}0$,
so that eventually we will think of the fundamental group as generated
by $x^{2}y$ and $x^{-1}y^{2}$ per our intuition from the start of
this section.
\end{example}

In particular, the corollary above implies that the fundamental group
of the bouquet graph with a single vertex and $n$ self loops is the
free group $F_{n}$. We can label the edges by their corresponding
basis elements $x_{1},...,x_{n}$ of $F_{n}$. Since it is important
in which direction we travel across the edge, we will think of each
edge as two directed edges labeled by $x_{i}$ and $x_{i}^{-1}$ depending
on the image in the fundamental group. For simplicity, we will keep
only the edges with the $x_{i}$ labeling, understanding that we can
also travel in the opposite direction via an $x_{i}^{-1}$ labeled
edge. 

It is well known that a fundamental group of a covering space correspond
to a subgroup of the original space. Using the generalization of the
labeling above we can produce covering using the combinatorics of
labeled graphs.

For the rest of this section we fix a basis $x_{1},...,x_{n}$ of
the free group $F_{n}$. 
\begin{defn}
A labeled graph $\left(\Gamma,v\right)$ is a directed graph $\Gamma$
with a special vertex $v$, where the edges are labeled by $x_{1},...,x_{n}$
(see \figref{labeled_graphs}). A labeled graph morphism $\left(\Gamma_{1},v_{1}\right)\to\left(\Gamma_{2},v_{2}\right)$
between labeled graphs is a morphism of graphs $\Gamma_{1}\to\Gamma_{2}$
which sends $v_{1}$ to $v_{2}$ and preserves the labels. 
\end{defn}

We denote by $\Gamma_{F_{n}}$ the bouquet graph with the $x_{1},...,x_{n}$
labeling. Note that another way to define a labeling on a graph $\Gamma$
is a morphism of directed graphs $\varphi:\Gamma\to\Gamma_{F_{n}}$
where the labeling of an edge $e\in E\left(\Gamma\right)$ is defined
to be the labeling of $\varphi\left(e\right)$. In this way a labeled
graph morphism is just a map which defines a commuting diagram
\[
\xymatrix{\left(\Gamma_{1},v_{1}\right)\ar[r]\ar[rd] & \left(\Gamma_{2},v_{2}\right)\ar[d]\\
 & \Gamma_{F_{n}}.
}
\]
This labeling map $\varphi:\Gamma\to\Gamma_{F_{n}}$ induces a homomorphism
$\hat{\varphi}:\pi_{1}\left(\Gamma,v\right)\to\pi_{1}\left(\Gamma_{F_{n}}\right)=F_{n}$.
Since every path in $\Gamma$ is sent to a cycle in $\Gamma_{F_{n}}$,
we can extend this map to general paths in $\Gamma$.
\begin{defn}
Let $\left(\Gamma,v\right)$ be a graph with a labeling $\varphi:\Gamma\to\Gamma_{F_{n}}$.
Given a path $P$ in $\Gamma$ starting at $v$, define the labeling
$L\left(P\right)$ of the path to be the (cycle) element $\varphi\left(P\right)$
in the fundamental group $\pi_{1}\left(F_{n}\right)$. In other words,
this is just the element in $F_{n}$ created by the labels on the
path.
\end{defn}

In general, for a labeled graph $\varphi:\Gamma\to\Gamma_{F_{n}}$
the function $\hat{\varphi}$ is not injective. However, in the Stallings
graphs case, defined below, it is.
\begin{defn}
A Stallings graph is a labeled graph $\varphi:\left(\Gamma,v\right)\to\Gamma_{F_{n}}$
where $\varphi$ is locally injective, namely for every vertex $u\in V\left(\Gamma\right)$
and every $i=1,...,n$ there is at most one outgoing edge from $u$
and at most one ingoing edge into $u$ labeled by $x_{i}$. We call
the graph a covering graph if $\varphi$ is a local homeomorphism,
or equivalently every vertex has exactly one ingoing and one outgoing
labeled by $x_{i}$ for every $i$.
\end{defn}

\begin{rem}
Given a covering graph, we can remove every edge and vertex which
are not part of a simple cycle so as to not change the fundamental
group. The resulting graph will be a Stallings graph, and conversely,
every Stallings graph can be extended to a covering graph of $\Gamma_{F_{n}}$
without changing the fundamental domain.
\end{rem}

In the Stallings graph case, it is an exercise to show that $\hat{\varphi}$
is injective, and we may consider $\pi_{1}\left(\Gamma,v\right)$
as a subgroup of $F_{n}$. Moreover, we can use \defref{Cycle_basis}
to find a basis for $\pi_{1}\left(\Gamma,v\right)$ as a subgroup
of $F_{n}$.
\begin{example}
In \figref{labeled_graphs} below, in the left most graph, the path
\[
P:=v_{0}\overset{x}{\longrightarrow}v_{1}\overset{y}{\longrightarrow}v_{2}\overset{y}{\longrightarrow}v_{0}\overset{y}{\longleftarrow}v_{0}
\]
is labeled by $L\left(P\right)=xyyy^{-1}=xy$. Similarly, in the second
graph from the right the path 
\[
P:=v_{0}\overset{x}{\longrightarrow}v_{1}\overset{y}{\longrightarrow}v_{2}\overset{x}{\longleftarrow}v_{3}\overset{y}{\longleftarrow}v_{0}
\]
is labeled by $\pi_{1}\left(P\right)=xyx^{-1}y^{-1}=\left[x,y\right]$. 

The images $\hat{\varphi}\left(\Gamma,v\right)$ for the graphs in
this figure from left to right are $\left\langle xy,xy^{2},y\right\rangle =\left\langle x,y\right\rangle ,\;\left\langle y,xyx^{-1}\right\rangle $,
$\left\langle xyx^{-1}y^{-1}\right\rangle $ and $\left\langle x,y\right\rangle $.
Note that the fundamental group of the left most graph is free of
rank $3$ (there are 3 loops in the graph) while the image $\hat{\varphi}\left(\Gamma,v\right)=\left\langle x,y\right\rangle $
is generated by only two element, which in particular indicates that
it is not a Stallings graph.
\end{example}

\begin{figure}[H]
\begin{centering}
\includegraphics[scale=0.6]{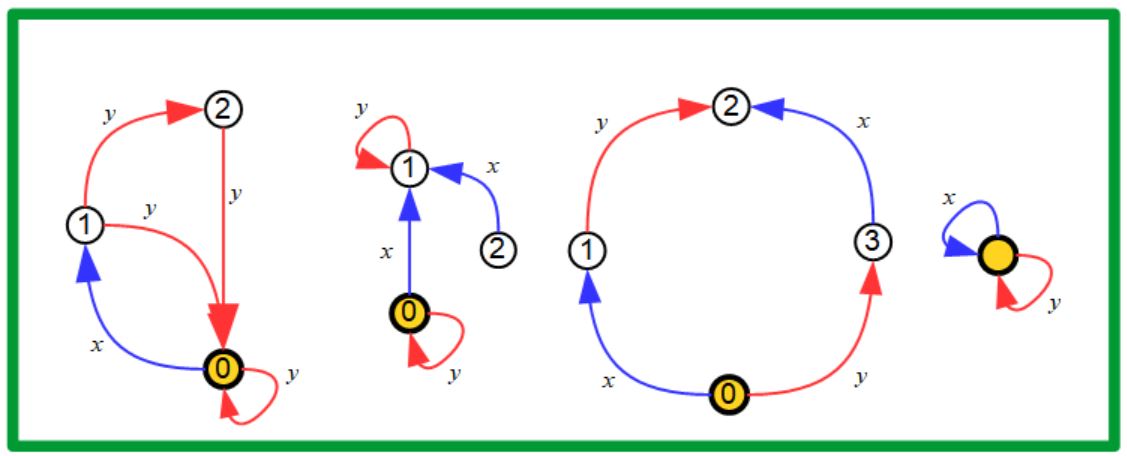}
\par\end{centering}
\caption{\label{fig:labeled_graphs}These are graphs labeled by $x,y$ where
$F_{2}=\left\langle x,y\right\rangle $, and the special vertices
are the yellow ones. The left most graph is not Stallings because
it has two $y$ labeled edges coming out of the same vertex. The rest
are Stallings graphs where the right most graph is $\Gamma_{F_{2}}$.}
\end{figure}
One way to construct Stallings graphs is by starting with a standard
labeled graph, and then taking a suitable quotient. We will use this
construction to build such graphs for subgroups of $F_{n}$.
\begin{defn}
Let $\varphi:\left(\Gamma,v\right)\to\Gamma_{F_{n}}$ be a labeled
graph. Define an equivalence relation on vertices $v_{1}\sim v_{2}$
if there exist paths $P_{1},P_{2}$ leading from $v_{0}$ to $v_{1},v_{2}$
respectively such that $\pi_{1}\left(P_{1}\right)=\pi_{1}\left(P_{2}\right)$.
Define an equivalence relation on the edges $\left(v_{1}\overset{x_{i}}{\to}v_{1}'\right)\sim\left(v_{2}\overset{x_{j}}{\to}v_{2}'\right)$
if $v_{1}\sim v_{2}$ and $x_{i}=x_{j}$ (which implies that $v_{1}'\sim v_{2}'$
also). 
\end{defn}

\newpage{}

It is easy to check that if $\left(\Gamma,v\right)$ is a labeled
graph, then the quotient graph $\left(\nicefrac{\Gamma}{\sim},\left[v\right]\right)$,
where $\left[v\right]$ is the image of $v$, is a Stallings graph.
We will usually also remove any edges and vertices which are not part
of a simple cycle, since these do not change the fundamental group.
\begin{example}
In the left most graph in \figref{labeled_graphs}, the path $v_{0}\overset{x}{\longrightarrow}v_{1}\overset{y}{\longrightarrow}v_{2}$
and the path $v_{0}\overset{x}{\longrightarrow}v_{1}\overset{y}{\longrightarrow}v_{0}$
define the same element in $F_{2}$, so we need to identify the vertices
$v_{0}$ and $v_{2}$. Similarly the paths $v_{0}\overset{y}{\longleftarrow}v_{0}$
and $v_{0}\overset{y}{\longleftarrow}v_{1}$ define the same element
so we need to identify $v_{0}$ and $v_{1}$, so in the end we are
left with the bouquet graph $\Gamma_{F_{2}}$.
\end{example}

\begin{defn}
Let $S\subseteq F_{n}$. For each $s\in S$, let $\left(P_{s},v_{s,0}\right)$
be a cycle graph on a single path $P_{s}$ such that $\pi_{1}\left(P_{s}\right)=s$.
Let $\Gamma$ be the graph $\bigcup_{s\in S}\left(P_{s},v_{s,0}\right)$
where we identify all the $v_{s,0}$ into a single vertex. Denote
by $\Gamma_{S}=\nicefrac{\Gamma}{\sim}$ its quotient.
\end{defn}

\begin{claim}
For any $S\subseteq F_{n}$ we have that $\pi_{1}\left(\Gamma_{S}\right)=\left\langle S\right\rangle $.
\end{claim}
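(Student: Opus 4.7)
The plan is to prove the two inclusions $\langle S\rangle\subseteq \pi_{1}(\Gamma_{S})$ and $\pi_{1}(\Gamma_{S})\subseteq \langle S\rangle$ separately, where both groups are viewed as subgroups of $F_{n}$ via the labeling map $\hat{\varphi}$. The pre-quotient graph $\Gamma=\bigvee_{s\in S}P_{s}$ is a wedge of cycles attached at the single vertex $v$, so applying \corref{cycle_basis} to a spanning tree obtained by deleting one edge from each $P_{s}$ shows that $\pi_{1}(\Gamma,v)$ is free with one generator per $s\in S$, and that generator is sent under the labeling to $s$ itself. Hence $\hat{\varphi}(\pi_{1}(\Gamma,v))=\langle S\rangle$, which we will use for both inclusions.

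For the easy direction, observe that the quotient map $q:\Gamma\to\Gamma_{S}$ is a labeled-graph morphism sending $v$ to $[v]$, so pushing each cycle $P_{s}$ forward produces a closed loop at $[v]$ in $\Gamma_{S}$ whose label is still $s$. This immediately gives $\langle S\rangle\subseteq \hat{\varphi}(\pi_{1}(\Gamma_{S},[v]))$.

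The main work is the opposite inclusion. Given a closed reduced path $\beta=e_{1}e_{2}\cdots e_{k}$ at $[v]$ in $\Gamma_{S}$ with label $w$, I will lift it to a closed path at $v$ in $\Gamma$ carrying the same label. For each edge $e_{j}$, pick any preimage edge $\tilde{e}_{j}:a_{j}\to b_{j}$ in $\Gamma$. The consecutive edges of $\beta$ force the relations $v\sim a_{1}$, $b_{j}\sim a_{j+1}$ for $1\le j<k$, and $b_{k}\sim v$. The key observation is that for any two equivalent vertices $u_{1}\sim u_{2}$, the definition of $\sim$ furnishes paths from $v$ to $u_{1}$ and from $v$ to $u_{2}$ in $\Gamma$ with a common label $g\in F_{n}$; concatenating the reverse of the first with the second yields a path from $u_{1}$ to $u_{2}$ in $\Gamma$ whose label is $g^{-1}g=1$ in $F_{n}$. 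Splicing such trivially-labeled connectors before $\tilde{e}_{1}$, between $\tilde{e}_{j}$ and $\tilde{e}_{j+1}$, and after $\tilde{e}_{k}$ produces a genuine closed path $\tilde{\beta}$ at $v$ in $\Gamma$, and since the connectors contribute nothing to the total label, $\tilde{\beta}$ has label $w$. By the opening paragraph, $w\in \hat{\varphi}(\pi_{1}(\Gamma,v))=\langle S\rangle$.

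I expect the bridging step to be the only real point that needs care: one has to check that the inserted connectors actually exist as paths in $\Gamma$ (rather than only formally), and that multiplying a word by the $F_{n}$-label of such a connector really leaves it unchanged in $F_{n}$. Once one is comfortable that the labeling extends consistently from closed loops to arbitrary paths and behaves multiplicatively under concatenation, the argument reduces to bookkeeping. As a side remark, the same construction implicitly shows that the Stallings folding procedure preserves the subgroup of $F_{n}$ represented by a labeled graph, which is the fact that will be invoked repeatedly in \subsecref{Proof-main}.
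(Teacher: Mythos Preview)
The paper does not actually prove this claim; it reads in full ``Left as an exercise to the reader.'' Your argument is a correct and natural way to carry out that exercise: computing $\hat{\varphi}(\pi_{1}(\Gamma,v))=\langle S\rangle$ on the unfolded wedge, pushing cycles forward along the quotient for one inclusion, and lifting loops from $\Gamma_{S}$ back to $\Gamma$ via trivially-labeled connectors for the other. The only point worth a word of justification is that the relation $\sim$ as written is already transitive (so that $[b_{j}]=[a_{j+1}]$ in $\Gamma_{S}$ really gives paths from $v$ to $b_{j}$ and to $a_{j+1}$ with equal labels, not merely a chain of such); this follows by splicing witnesses through $v$, exactly as you splice the connectors in your lift.
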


\begin{proof}
Left as an exercise to the reader.
\end{proof}
With our new language of Stallings graphs, we can now prove how a
simple condition of injectivity implies that one subgroup is a free
factor of another subgroup.
\begin{claim}
\label{claim:injective_is_direct_summand}Let $\left(\Gamma,v\right)$
be a Stallings graph and $\left(\Gamma',v\right)$ a labeled subgraph
(which must be Stallings as well). Then $\pi_{1}\left(\Gamma',v\right)$
is a free factor of $\pi_{1}\left(\Gamma,v\right)$.
\end{claim}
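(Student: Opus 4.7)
The plan is to find a common basis: pick a spanning tree of $\Gamma'$ first, extend it to a spanning tree of $\Gamma$, and then invoke Corollary \ref{cor:cycle_basis} twice to get bases of $\pi_1(\Gamma', v)$ and $\pi_1(\Gamma, v)$ with the former literally contained in the latter. Free factor-ness then follows immediately from the universal property of free groups on a basis.

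More precisely, I would first reduce to the case where $\Gamma$ and $\Gamma'$ are connected by throwing away the connected components that do not contain $v$ (this does not affect the fundamental groups based at $v$). Let $T'$ be a spanning tree of $\Gamma'$ and extend it to a spanning tree $T$ of $\Gamma$. The crucial observation is that
\[
T \cap E(\Gamma') = T'.
\]
Indeed, if $e \in E(\Gamma') \setminus T'$ were in $T$, then both endpoints of $e$ would already be connected through $T' \subseteq T$, so $T$ would contain a cycle, contradicting that it is a tree. Now by Corollary \ref{cor:cycle_basis}, the sets
\[
B' = \{C_e^{T'} : e \in E(\Gamma') \setminus T'\}, \qquad B = \{C_e^{T} : e \in E(\Gamma) \setminus T\}
\]
are bases of $\pi_1(\Gamma', v)$ and $\pi_1(\Gamma, v)$ respectively, where the superscript indicates the spanning tree used to build the simple cycle.

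The second key observation is that for each edge $e = (u \to w) \in E(\Gamma')\setminus T'$, the cycles $C_e^{T'}$ and $C_e^T$ are the same element of $\pi_1(\Gamma, v)$. This is because $u, w \in V(\Gamma')$ and the unique path from $v$ to any such vertex inside the larger tree $T$ must coincide with the unique path inside the subtree $T'$ (since $T' \subseteq T$ already provides such a path and $T$ has no cycles). Hence $B' \subseteq B$, and under the embedding induced by inclusion we may identify $\pi_1(\Gamma', v) = \langle B' \rangle$ as a subgroup of $\pi_1(\Gamma, v) = \langle B \rangle$. Since $B$ is a free basis, $\pi_1(\Gamma, v)$ splits as the free product $\langle B' \rangle * \langle B \setminus B' \rangle$, realizing $\pi_1(\Gamma', v)$ as a free factor of $\pi_1(\Gamma, v)$.

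The only subtle point I expect is the verification that the spanning tree $T'$ of $\Gamma'$ really does extend to a spanning tree of $\Gamma$ intersecting $E(\Gamma')$ in exactly $T'$; but this is immediate from the standard greedy construction of spanning trees together with the acyclicity argument above. Everything else is bookkeeping with the cycle basis from Definition \ref{def:Cycle_basis}.
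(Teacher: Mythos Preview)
Your proof is correct and follows exactly the same approach as the paper: extend a spanning tree $T'$ of $\Gamma'$ to a spanning tree $T$ of $\Gamma$ and observe that the cycle basis $C(T')$ sits inside $C(T)$. The paper states this containment without justification, whereas you supply the two verifications ($T\cap E(\Gamma')=T'$ and $C_e^{T'}=C_e^{T}$ for $e\in E(\Gamma')\setminus T'$) that make it rigorous.
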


\begin{proof}
Let $T'$ be a spanning tree for $\Gamma'$ and extend it to a tree
$T$ of $\Gamma$. Our construction of cycle basis $C\left(T\right)$
will contain the cycle basis $C\left(T'\right)$, so that we can choose
generators for $\pi_{1}\left(\Gamma',v\right)$ which is a subset
of a set of generators for $\pi_{1}\left(\Gamma,v\right)$, which
implies that the first is a free factor of the second.
\end{proof}
\begin{example}
Consider the following Stallings graph:

\begin{figure}[H]
\begin{centering}
\includegraphics[scale=0.6]{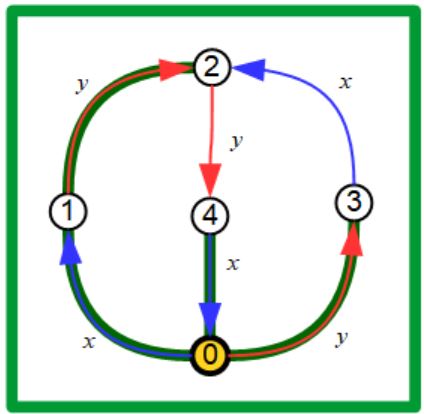}
\par\end{centering}
\caption{\label{fig:stallings_tree}A Stallings graph where the green edges
form a spanning tree.}
\end{figure}
This graph contains a Stallings subgraph on the path corresponding
to $xyx^{-1}y^{-1}$. This subgraph has as spanning tree the 3 green
edges on it, and we add another green edge to create a spanning tree
for the full graph. As generators for the fundamental group we first
take $xyx^{-1}y^{-1}$ for the $x$ edge which is not in green, and
the second generator is $xy^{2}x$ for the $y$-labeled edge which
is not in the spanning tree. Using \claimref{injective_is_direct_summand}
we conclude that $\left\langle \left[x,y\right]\right\rangle $ is
a free factor of $\left\langle \left[x,y\right],xy^{2}x\right\rangle $.
Note that a priori, the group $\left\langle \left[x,y\right],xy^{2}x\right\rangle $
might be generated by 1 element, and then $\left\langle \left[x,y\right]\right\rangle $
is a free factor exactly if it equals the full group. The graph visualization
tells us that the second group is actually bigger and needs at least
two generators.
\end{example}

\newpage{}

If $H\leq N\leq F_{n}$ then we can construct the two Stallings graphs
$\Gamma_{H},\Gamma_{N}$ and then there is a natural labeled morphism
$\Gamma_{H}\to\Gamma_{N}$. If this morphism is injective, then by
the claim above we know that $H\leq_{*}N$ is a free factor of $N$.
However, the injectiveness of this map depends also on our initial
choice of basis for $F_{n}$, and in general $H$ can be a free factor
of $N$ even when the corresponding graph morphism is not injective.
The next result uses the Stallings graphs to show how to naturally
find two subgroups where the Stallings graphs are injective, and therefore
one is a free factor of the other. As we shall see later, this type
of result is exactly what we need in our reduction in \subsecref{The-reduction-idea}.
\begin{thm}
\label{thm:direct_summand_construction}Let $N_{j}\;,\;j\in J$ be
a directed system of subgroups of $F_{n}$, and let $S\subseteq N=\bigcap_{j\in J}N_{j}$.
Then there is some $S\subseteq H\leq N$ and $j_{0}$ such that $H$
is a free factor of any subgroup $N'$ such that $H\leq N'\leq N_{j_{0}}$.
If $S$ is finite, then we may take $H$ which is finitely generated.
\end{thm}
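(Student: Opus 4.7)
The plan is to pass to the Stallings graph picture: since $N\leq N_{j}$ for every $j$, each inclusion induces a labeled graph morphism $\phi_{j}\colon\Gamma_{N}\to\Gamma_{N_{j}}$, and we will look for a subgraph $\Gamma_{H}\subseteq\Gamma_{N}$ carrying the loops of every $s\in S$ on which some $\phi_{j_{0}}$ is injective. Once this is arranged, for any intermediate $H\leq N'\leq N_{j_{0}}$ the induced maps factor as $\Gamma_{H}\to\Gamma_{N'}\to\Gamma_{N_{j_{0}}}$ and compose to the injection $\phi_{j_{0}}|_{\Gamma_{H}}$, forcing $\Gamma_{H}\hookrightarrow\Gamma_{N'}$ to be injective; \claimref{injective_is_direct_summand} then yields that $H$ is a free factor of $N'$.

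First I would define $H$ by letting $\Gamma_{H}$ be the smallest Stallings subgraph of $\Gamma_{N}$ containing, for every $s\in S$, the closed path at the base vertex whose label spells $s$; then set $H:=\pi_{1}(\Gamma_{H},v)$, so that $S\subseteq H\leq N$ and $\Gamma_{H}$ is an embedded Stallings subgraph of $\Gamma_{N}$. When $S$ is finite, $\Gamma_{H}$ is a finite graph and $H$ is finitely generated by \corref{cycle_basis}, yielding the refinement in the second sentence of the theorem. Recalling that a vertex of $\Gamma_{N_{j}}$ is a right coset $N_{j}\alpha$ reached by a path of label $\alpha$, and that distinct edges at the same source of a Stallings graph carry distinct labels (so edge collapse under $\phi_{j}$ reduces to vertex collapse), global injectivity of $\phi_{j}|_{\Gamma_{H}}$ reduces to vertex-separation: two distinct $v_{1},v_{2}\in V(\Gamma_{H})$ are represented by paths $P_{1},P_{2}$ from $v$ with $L(P_{1})L(P_{2})^{-1}\notin N$, and because $N=\bigcap_{j}N_{j}$ there exists $j(v_{1},v_{2})\in J$ with $L(P_{1})L(P_{2})^{-1}\notin N_{j(v_{1},v_{2})}$, so that $\phi_{j(v_{1},v_{2})}$ keeps this pair apart.

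When $S$ is finite there are only finitely many such pairs, and directedness of $\{N_{j}\}$ yields a single index $j_{0}\in J$ dominating all of them, so $\phi_{j_{0}}|_{\Gamma_{H}}$ is injective and the free-factor conclusion follows as outlined above. The main obstacle is essentially bookkeeping: one has to identify distinctness of vertices in $\Gamma_{N}$ with the element-theoretic condition of non-membership in $N$, then transport this to distinctness in each $\Gamma_{N_{j}}$ via the standard description of Stallings graphs as cores of Schreier coset graphs, and finally verify that the restriction $\phi_{j_{0}}|_{\Gamma_{H}}$ really does factor through every intermediate $\Gamma_{N'}$. For general (possibly infinite) $S$ the same skeleton works pair-by-pair, but separating infinitely many pairs with a single $j_{0}$ requires either a cofinality assumption on $J$ or additional structure; fortunately, in the paper's intended applications $S$ will have a single element, so the finite case suffices.
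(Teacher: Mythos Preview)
Your proposal is correct and follows essentially the same route as the paper: define $H$ as the fundamental group of the image of $\Gamma_{S}$ inside $\Gamma_{N}$, separate each pair of distinct vertices of $\Gamma_{H}$ by some $N_{j}$ using $N=\bigcap_{j}N_{j}$, invoke directedness to find a single $j_{0}$, and then factor $\Gamma_{H}\to\Gamma_{N'}\to\Gamma_{N_{j_{0}}}$ to apply \claimref{injective_is_direct_summand}. Your caveat about the infinite-$S$ case is well taken---the paper's proof silently uses finiteness of $\Gamma_{H}$ at the directedness step---and, as you note, only the finite (indeed singleton) case is needed downstream.
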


\begin{proof}
Consider the map $\Gamma_{S}\to\Gamma_{N}$. The image of this map
is a sub Stallings graph of $\Gamma_{N}$ corresponding to some $\Gamma_{H}$
for some $H\leq N$. Note that if $S$ is finite, than so is $\Gamma_{S}$
and $\Gamma_{H}$ and therefore $H$ is finitely generated. We claim
that there is some $j_{0}$ such that $\Gamma_{H}\to\Gamma_{N_{j_{0}}}$
is injective. Once we know this, if $H\leq N'\leq N_{j_{0}}$ is any
intermediate group, then $\Gamma_{H}\to\Gamma_{N_{j_{0}}}$ is the
composition of $\Gamma_{H}\to\Gamma_{N'}\to\Gamma_{N_{j_{0}}}$ making
$\Gamma_{H}\to\Gamma_{N'}$ injective as well. By \claimref{injective_is_direct_summand}
it follows that $H\leq_{*}N'$ for any such $N'$, thus completing
the proof.

To find such a $j_{0}$, for each $v\in\Gamma_{H}$ let $P_{v}$ be
a path from $v_{0}$ to $v$ in $\Gamma_{H}$ where $v_{0}$ is the
special vertex. If $v_{1}\neq v_{2}$ in $\Gamma_{H}$, then $g_{v_{1},v_{2}}:=\pi_{1}\left(P_{v_{1}}\right)\pi_{1}\left(P_{v_{2}}\right)^{-1}\notin H$
and because $\Gamma_{H}\subseteq\Gamma_{N}$, this element is not
in $N$ as well. Hence, we can find some $j_{v_{1},v_{2}}$ such that
$g_{v_{1},v_{2}}\notin N_{j_{v_{1},v_{2}}}$, and using the directedness
of $N_{j}$, we can find $j_{0}$ such that $g_{v,u}\notin N_{j_{0}}$
for any two distinct vertices $v,u\in\Gamma_{H}$. This implies in
turn that the map $\Gamma_{H}\to\Gamma_{N_{j_{0}}}$ is injective
on the vertices, and since this is a morphism of Stalling graphs it
is injective on the edges as well, which is exactly what we needed.
\end{proof}
\begin{example}
Consider the set $S=\left\{ x^{3},y^{2}\right\} $ where $N=\left\langle y,xyx^{-1},x^{3}\right\rangle $.
In this case the image of $S$ in $\Gamma_{N}$, as can be seen in
the figure below, is the group $H=\left\langle x^{3},y\right\rangle $.
The element $x^{3}$ is mapped to $x^{3}$ while $y^{2}$ circles
twice around $y$.
\end{example}

\begin{figure}[H]
\begin{centering}
\includegraphics[scale=0.6]{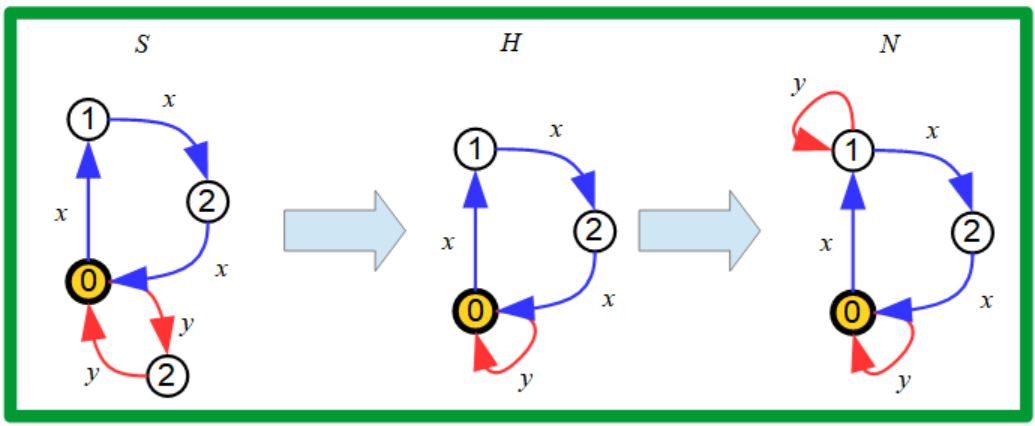}
\par\end{centering}
\caption{\label{fig:factor}The set $S=\left\{ x^{3},y^{2}\right\} $ is contained
in the (finitely generated) $H=\left\langle x^{3},y\right\rangle $
which is a free factor of $N=\left\langle y,x^{3},xyx^{-1}\right\rangle $.}
\end{figure}

\subsection{\label{subsec:Proof-main}Proof of the main theorem}

Let $g\in F_{k}$ and $m\in\ZZ$. Our profinite notation shows that
$x^{m}=g$ has the local to global property if a solution over $\hat{F}_{k}$
implies a solution over $F_{k}$. This is a very special type of equation
which can be written as $w=g$, where the left side contains only
parameters and the right side contain only $g$. In this section we
provide the details for the ideas in \subsecref{The-reduction-idea}
for this type of equations and then the specialization for $x^{m}=g$
will lead to the proof of \thmref{main_theorem}.

For the rest of this section, we will have two free groups. The first
will be denoted by $F_{n}$ and the free variables will be from it,
while the second $G=F_{k}$ will be the group in which we try to prove
the local global principle. 
\begin{defn}
Let $w\in F_{n}$. We say that $w$ satisfies the local to global
property in $F_{k}$, if for any $g\in G=F_{k}$, the equation $w=g$
is solvable in $\hat{G}$ if and only if it is solvable in $G$. We
say that $w$ satisfies the local global property in free groups,
if it satisfies it for every $k\geq1$.
\end{defn}

\begin{thm}
\label{thm:reduction_step}Let $w\in F_{n}$. Then $w$ satisfies
the local to global property in free groups, if and only if it satisfies
it in $F_{k}$ for any $k\leq n$.
\end{thm}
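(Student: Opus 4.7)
The easy direction is immediate: if $w$ satisfies the local-to-global property in every $F_k$, then in particular it does so for $k \leq n$. For the non-trivial direction I would assume LTG holds for $w$ in each $F_{k'}$ with $k' \leq n$, fix $k$ and $g \in F_k$, and suppose $(\hat{h}_1,\ldots,\hat{h}_n) \in \hat{F}_k^{\,n}$ is a solution to $w = g$ in $\hat{F}_k$. The goal is to reduce to a finitely generated free subgroup $H_0 \leq F_k$ of rank at most $n$ that contains $g$ and carries a solution obtained by projection, following the scheme from \subsecref{The-reduction-idea}.

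Set $H := \overline{\langle \hat{h}_1,\ldots,\hat{h}_n\rangle} \leq \hat{F}_k$. To produce the subgroup $H_0$ with $g \in H_0$, $\overline{H_0} \leq H$, and $H_0$ a free factor of some $N \leq F_k$ with $H \leq \overline{N}$, I would apply \thmref{direct_summand_construction} to the directed system $\{N_U\}$, where $U$ ranges over the open normal subgroups of $\hat{F}_k$ and $N_U := HU \cap F_k$. Each $N_U$ has finite index in $F_k$ with $\overline{N_U} = HU \supseteq H$; the system is downward-directed via $N_{U_1 \cap U_2} \subseteq N_{U_1} \cap N_{U_2}$; and $\bigcap_U N_U = H \cap F_k$, which contains $g$. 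Applying \thmref{direct_summand_construction} with $S = \{g\}$ returns a finitely generated $H_0$ with $g \in H_0 \leq H \cap F_k$ (so $\overline{H_0} \leq H$ automatically since $H$ is closed) that is a free factor of some $N := N_{U_0}$.

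With this setup, the free-factor retraction $\pi \colon N = H_0 * H_1 \to H_0$ extends by functoriality of profinite completion to a continuous retraction $\hat{\pi} \colon \overline{N} \to \overline{H_0}$; here I use that for finitely generated subgroups of a free group the topology induced from $\hat{F}_k$ coincides with the intrinsic profinite topology, so $\overline{N}$ agrees with $\hat{N}$ and $\overline{H_0}$ with $\widehat{H_0}$. Since $H \leq \overline{N}$ and $\overline{H_0} \leq H$ with $\hat{\pi}$ fixing $\overline{H_0}$ pointwise, the restriction $\pi_H := \hat{\pi}|_H$ is a continuous surjection $H \twoheadrightarrow \overline{H_0}$. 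Applying $\pi_H$ to the given solution yields $\tilde{h}_i := \pi_H(\hat{h}_i) \in \overline{H_0}$ with $w(\tilde{h}_1,\ldots,\tilde{h}_n) = \hat{\pi}(g) = g$, and these $n$ elements topologically generate $\overline{H_0}$ because the $\hat{h}_i$ topologically generate $H$.

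Since $\widehat{H_0} = \overline{H_0}$ is topologically generated by $n$ elements and $H_0$ is a finitely generated free group, the standard fact that the rank of a finitely generated free group equals the minimal number of topological generators of its profinite completion forces $H_0 \cong F_{k'}$ with $k' \leq n$. The hypothesis that LTG holds for $w$ in $F_{k'}$, applied to the solution $(\tilde{h}_1,\ldots,\tilde{h}_n)$ of $w = g$ in $\widehat{H_0}$, then produces an honest solution $(h_1,\ldots,h_n) \in H_0^{\,n} \subseteq F_k^{\,n}$ of $w = g$ in $F_k$. The main obstacle I anticipate is the profinite bookkeeping in the third paragraph: verifying that the free-factor projection really does extend continuously to the closures inside $\hat{F}_k$, which ultimately rests on the subgroup separability of free groups (Marshall Hall's theorem) so that the closures of $N_{U_0}$ and $H_0$ inside $\hat{F}_k$ genuinely agree with their intrinsic profinite completions.
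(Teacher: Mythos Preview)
Your proposal is correct and follows essentially the same route as the paper. The only cosmetic differences are: (i) you index the directed system by open normal $U \trianglelefteq \hat F_k$ via $N_U = HU \cap F_k$, whereas the paper takes directly all finite-index $N \leq F_k$ with $H \leq \overline{N}$ --- your family is cofinal in the paper's, and both have intersection $H \cap F_k$; (ii) for the fact that the closures $\overline{N}$ and $\overline{H_0}$ in $\hat F_k$ coincide with their intrinsic profinite completions, you invoke Marshall Hall/LERF, while the paper uses its \lemref{induced_profinite} (finite-index subgroups and free factors inherit the profinite topology), which is exactly the special case needed here and is itself the content of Hall's theorem.
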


\begin{proof}
The $\Rightarrow$ direction is clear. Let us assume that $w$ satisfies
the local to global property for $k\leq n$ and show that it holds
for general $k$.

Let $k\in\NN$, $g\in F_{k}$, and assume that $w=g$ is solvable
in $\hat{F}_{k}$. Let $\hat{g}_{1},...,\hat{g}_{n}\in\hat{F_{k}}$
such that $w\left(\hat{g}_{1},...,\hat{g}_{n}\right)=g$ and set $H=\overline{\left\langle \hat{g}_{1},...,\hat{g}_{n}\right\rangle }$.
Since $H$ is closed in $\hat{F}_{k}$, it is the intersection of
all the closed finite index subgroups of $\hat{F}_{k}$ which contain
it. However, by \lemref{finite_index_correspondence} these subgroups
are in bijection with the finite index subgroups of $F_{k}$ via the
maps $N\mapsto\overline{N}$ and $\overline{N}\mapsto F_{k}\cap\overline{N}$.
Thus if we denote this set by $J=\left\{ N\;\mid\;N\underset{f.i.}{\leq}F_{k},\;H\leq\overline{N}\right\} $,
which is a directed set, then $H=\bigcap_{N\in J}\overline{N}$. More
over, since $g\in H\leq\overline{N}$ for every $N\in J$ we conclude
that $g\in\overline{N}\cap F_{k}=N$, and therefore $g\in\bigcap_{N\in J}N$.
Hence we have the following subgroups:
\[
\left\langle g\right\rangle \leq\bigcap_{N\in J}N\leq H=\bigcap_{N\in J}\overline{N}
\]

Applying \thmref{direct_summand_construction} for $S=\left\{ g\right\} $,
we can find $\left\langle g\right\rangle \leq H_{0}\leq\bigcap_{N\in J}N$
with $H_{0}$ finitely generated and $H_{0}\leq_{*}N$ for some $N\in J$,
so we may define a projection $\pi:N\to H_{0}$ which is the identity
on $H_{0}$. The group $N$ has finite index in $F_{k}$ and $H_{0}$
is a free factor in $N$, so by \lemref{induced_profinite} their
subspace topology is the profinite topology. Moreover, the group $N$
is finitely generated as a finite index subgroup of the finitely generated
group $F_{k}$, and of course $H_{0}$ is finitely generated by assumption.
We can now use \corref{extension_profinite} to find a continuous
extension $\hat{\pi}:\overline{N}\to\overline{H}_{0}$ which is the
identity on $\overline{H}_{0}$.

Since $g\in\overline{H}_{0}$, we get that 
\[
g=\pi\left(g\right)=\pi\left(w\left(\hat{g}_{1},...,\hat{g}_{n}\right)\right)=w\left(\pi\left(\hat{g}_{1}\right),...,\pi\left(\hat{g}_{n}\right)\right),
\]
so that $w=g$ is solvable in $\overline{H}_{0}$. Moreover, because
$\overline{H}_{0}\leq H\leq\overline{N}_{j}$, then the restriction
of $\pi$ to $H$ is also surjective on $\overline{H}_{0}$, implying
that $\overline{H}_{0}$ is generated topologically by $\pi\left(\hat{g}_{1}\right),...,\pi\left(\hat{g}_{n}\right)$.

The group $H_{0}$ is finitely generated subgroup of a free group,
and therefore $H_{0}\cong F_{k'}$ for some $k'$. Also, since by
\lemref{induced_profinite} its subspace topology is the profinite
topology we conclude that $\overline{H}_{0}\cong\hat{H}_{0}\cong\hat{F}_{k'}$.
But $\hat{F}_{k'}$ is generated topologically by $n$ elements, so
that $k'\leq n$. Indeed, if $x_{1},...,x_{k'}$ is a basis for $F_{k'}$
then there is the projection $\pi:F_{k'}\to\FF_{2}^{k'}$ sending
$x_{i}$ to the standard basis element $e_{i}$. Using \lemref{continuous_hom_extension},
we can extend $\pi$ to the projection $\pi:\hat{F}_{k'}\to\FF_{2}^{k'}$.
If $\hat{F}_{k'}$ is generated topologically by $n$ elements, then
so is any of its quotients, and since $\FF_{2}^{k'}$ cannot be generated
by less than $k'$ elements, we conclude that $k'\leq n$.

To summarize, the equation $w=g$ has a solution in $\hat{H}_{0}\cong\hat{F}_{k'}$
where $g\in H_{0}\cong F_{k}$. Under our assumption, the word $w$
has the local to global property for $k'\leq n$, so it has a solution
in $H_{0}\leq G$, thus completing the proof.
\end{proof}
\newpage{}

Finally, we can use this reduction to prove that $w=x^{m}$ has the
local to global property for free groups.
\begin{proof}[Proof of \thmref{main_theorem}]
 Since $w\in F_{1}$, by the reduction step in \thmref{reduction_step},
it is enough to prove that $w$ has the local to global property in
$F_{1}\cong\ZZ$. But we already saw that this is true, hence $w$
has the local to global property for free groups.
\end{proof}

\appendix

\section{\label{sec:Some-profinite-results}Some profinite results}

In this section we collect all sort of results about profinite groups
which are well known, but we add them here for the convenience of
the reader.

We start by understanding the continuous homomorphisms between profinite
groups and their profinite completions.
\begin{lem}
\label{lem:profinite_hom}Any homomorphism $\varphi:G_{1}\to G_{2}$
between groups with the profinite topologies is continuous.
\end{lem}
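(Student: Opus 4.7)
The plan is to reduce continuity everywhere to continuity at the identity, and then argue that preimages of basic open neighborhoods of $e_{G_2}$ are automatically open in $G_1$ thanks to a purely set-theoretic index count.

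First I would recall the formal setup of the profinite topology on a group $G$: a sub-basis of neighborhoods of the identity is given by the finite-index normal subgroups $K \fidealeq G$, and translation by any $g \in G$ is a homeomorphism (since it permutes the cosets of each $K$). Because $\varphi$ is a homomorphism it commutes with translations in the sense that $\varphi(gh) = \varphi(g)\varphi(h)$, so to prove continuity of $\varphi$ on all of $G_1$ it suffices to prove it at $e_{G_1}$. Explicitly, if $U$ is an open neighborhood of $\varphi(g)$ in $G_2$, then $\varphi(g)^{-1}U$ is an open neighborhood of $e_{G_2}$, and $\varphi^{-1}(U) = g \cdot \varphi^{-1}(\varphi(g)^{-1} U)$, so openness of the latter gives openness of the former.

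Next I would verify continuity at $e_{G_1}$. Take any basic open neighborhood of $e_{G_2}$, that is, a finite-index normal subgroup $K \fidealeq G_2$. The preimage $\varphi^{-1}(K)$ is again a subgroup of $G_1$, and by the first isomorphism theorem (applied to the composition $G_1 \overset{\varphi}{\to} G_2 \to G_2/K$) its index satisfies
\[
[G_1 : \varphi^{-1}(K)] \leq [G_2 : K] < \infty.
\]
Hence $\varphi^{-1}(K)$ is a finite-index subgroup of $G_1$. Any finite-index subgroup contains a finite-index normal subgroup (the intersection of its finitely many conjugates), so it is a union of cosets of a finite-index normal subgroup and is therefore open in the profinite topology on $G_1$. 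This gives continuity at the identity, and combined with the translation argument completes the proof.

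I do not expect any real obstacle here; the only subtle point is remembering that basic open neighborhoods of the identity in the profinite topology can be taken to be finite-index \emph{normal} subgroups (so that we are dealing with a genuine basis, not just a sub-basis), and that a non-normal finite-index subgroup is still open because it contains such a normal one. Everything else is a direct application of the definitions.
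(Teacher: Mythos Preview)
Your proof is correct. You work directly with the basis of the profinite topology: reduce to continuity at the identity via translations, take a finite-index normal subgroup $K\fidealeq G_2$ as a basic neighborhood of $e_{G_2}$, and check that $\varphi^{-1}(K)$ has finite index in $G_1$ and is therefore open. All steps are sound.

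The paper takes a slightly different, more abstract route. It uses the fact that the profinite topology on $G_2$ is, by definition, the \emph{initial topology} for the family of all homomorphisms $\pi:G_2\to H$ with $H$ finite. Hence $\varphi:G_1\to G_2$ is continuous if and only if each composite $\pi\circ\varphi:G_1\to H$ is continuous; but $\pi\circ\varphi$ is itself a homomorphism from $G_1$ to a finite group, so it is continuous by the very definition of the profinite topology on $G_1$. This avoids the translation reduction and the index computation entirely. Your argument is more hands-on and makes the underlying reason (preimages of finite-index subgroups have finite index) completely explicit, while the paper's argument is a one-line appeal to the universal property of an initial topology. Both reach the same conclusion with no real difference in depth.
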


\begin{proof}
The profinite topology is defined as the weakest topology where all
the projections to finite groups are continuous. Hence, we need to
show that if $H$ is finite and $\pi:G_{2}\to H$ is a homomorphism,
then $\varphi\circ\pi:G_{1}\to G_{2}\to K$ is continuous. But this
is true by definition of the profinite topology on $G_{1}$, which
completes the proof.
\end{proof}
\begin{lem}
\label{lem:continuous_hom_extension}Let $G$ be a dense subgroup
of the metric group $\overline{G}$ and $H$ a compact metric group.
Then any continuous homomorphism $\varphi:G\to H$ has a unique continuous
extensions to a homomorphism $\overline{\varphi}:\overline{G}\to H$.
\end{lem}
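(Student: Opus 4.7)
The plan is to prove uniqueness first, since it is immediate, and then construct the extension by taking limits of $\varphi$ along sequences converging to points of $\overline{G}$.

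For uniqueness, if $\overline{\varphi}_1, \overline{\varphi}_2 : \overline{G} \to H$ are two continuous extensions of $\varphi$, they agree on the dense subset $G$. Since $H$ is Hausdorff (being metric), the set $\{x \in \overline{G} : \overline{\varphi}_1(x) = \overline{\varphi}_2(x)\}$ is closed, and being dense it must equal all of $\overline{G}$.

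For existence, let $\overline{g} \in \overline{G}$ and choose a sequence $g_n \in G$ with $g_n \to \overline{g}$, which exists by density. The key step is showing that $\varphi(g_n)$ is Cauchy in $H$. Given $\epsilon > 0$, continuity of $\varphi$ at $e_G$ yields a neighborhood $U$ of $e_G$ in $G$ (in its subspace topology inherited from $\overline{G}$) with $\varphi(U) \subseteq B_\epsilon(e_H)$, so $U = V \cap G$ for some open $V \ni e$ in $\overline{G}$. Since $g_n \to \overline{g}$, the sequence $g_n g_m^{-1} \to e$ in $\overline{G}$, so eventually $g_n g_m^{-1} \in V \cap G = U$, giving $d_H\bigl(\varphi(g_n), \varphi(g_m)\bigr) < \epsilon$. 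Since $H$ is compact metric, hence complete, $\varphi(g_n)$ converges to some limit; define $\overline{\varphi}(\overline{g})$ to be this limit. A nearly identical argument shows the limit is independent of the choice of approximating sequence, so $\overline{\varphi}$ is well-defined.

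Next, I would check $\overline{\varphi}$ is a homomorphism: given $\overline{g}, \overline{h} \in \overline{G}$ with $g_n \to \overline{g}$ and $h_n \to \overline{h}$, continuity of multiplication in $\overline{G}$ gives $g_n h_n \to \overline{g}\overline{h}$, and continuity of multiplication in $H$ together with the definition of $\overline{\varphi}$ gives
\[
\overline{\varphi}(\overline{g}\overline{h}) = \lim_{n} \varphi(g_n h_n) = \lim_{n} \varphi(g_n)\varphi(h_n) = \overline{\varphi}(\overline{g})\overline{\varphi}(\overline{h}).
\]
Finally, continuity of $\overline{\varphi}$ follows from a standard $\epsilon/3$ argument: given $\overline{g}_0 \in \overline{G}$ and $\epsilon > 0$, pick $g \in G$ close enough to $\overline{g}_0$ that $d_H(\overline{\varphi}(\overline{g}_0), \varphi(g)) < \epsilon/3$, use continuity of $\varphi$ at $g$ to control a neighborhood of $g$ in $G$, and extend by density to a neighborhood of $\overline{g}_0$ in $\overline{G}$.

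The main obstacle is really the Cauchy step, which is where one must use both that $\varphi$ is continuous and that $G$ carries the subspace topology from $\overline{G}$; everything else is routine transfer of algebraic and topological identities across the density of $G$.
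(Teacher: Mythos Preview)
Your proof is correct and follows essentially the same line as the paper's: both arguments hinge on continuity of $\varphi$ at the identity to control $\varphi(g_n)\varphi(g_m)^{-1}$ when $g_n,g_m\to\overline{g}$, and both use compactness of $H$ to produce the limit. The only cosmetic difference is that you show $\bigl(\varphi(g_n)\bigr)$ is Cauchy and invoke completeness, whereas the paper passes to a convergent subsequence by compactness and then checks that any two subsequential limits coincide; the paper also leaves the homomorphism and continuity verifications (and uniqueness) as a ``standard exercise,'' which you spell out explicitly.
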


\begin{proof}
Given $g\in\overline{G}$ we can find a sequence $g_{i}\in G$ such
that $g_{i}\to g$. Because $H$ is compact, by restricting to a subsequence
we may assume that $\varphi\left(g_{i}\right)\to h$ for some $h\in H$.
If $g_{i}'\to g$ is any other such sequence with $\varphi\left(g_{i}'\right)\to h'$,
then $g_{i}'g_{i}^{-1}\to e$ so that 
\[
h'h^{-1}=\limfi i{\infty}\varphi\left(g_{i}'\right)\varphi\left(g_{i}^{-1}\right)=\varphi\left(e\right)=e,
\]
so we see that $\overline{\varphi}:g\mapsto h$ is well defined. In
particular, for $g\in G$, we may take $g_{i}=g$, so that $\overline{\varphi}\left(g\right)=\varphi\left(g\right)$
, namely it is an extension of $\varphi$.

It is now a standard exercise to show that $\overline{\varphi}$ is
a continuous homomorphism.
\end{proof}
\begin{cor}
\label{cor:extension_profinite}Let $G,H$ be finitely generated,
residually finite groups with the profinite topology. Then any homomorphism
$\varphi:G\to H$ can be be extended uniquely to a continuous homomorphism
$\hat{\varphi}:\hat{G}\to\hat{H}$.
\end{cor}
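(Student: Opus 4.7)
The plan is to combine the two preceding lemmas. By \lemref{profinite_hom}, the homomorphism $\varphi : G \to H$ is already continuous once both groups carry the profinite topology. Composing with the inclusion $\iota : H \hookrightarrow \hat{H}$, which is continuous by the very definition of the profinite completion, yields a continuous homomorphism $\iota \circ \varphi : G \to \hat{H}$. Since $G$ sits as a dense subgroup of its profinite completion $\hat{G}$, the natural candidate for $\hat{\varphi}$ is the extension of $\iota \circ \varphi$ to $\hat{G}$ produced by \lemref{continuous_hom_extension}; I would simply invoke that lemma to obtain $\hat{\varphi} : \hat{G} \to \hat{H}$.

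Before invoking the lemma I would verify its hypotheses: that $\hat{G}$ is metric and that $\hat{H}$ is compact metric. Compactness of $\hat{H}$ is part of the general theorem on profinite completions already proved in \secref{Profinite-topology}. For metrizability, I would use the assumption that $G$ and $H$ are \emph{finitely generated}: by a classical counting argument of M.~Hall, a finitely generated group has only finitely many subgroups of each fixed finite index, and in particular only countably many finite index normal subgroups in total. Hence the product $\prod_{i} H_{i}$ defining the completion can be taken over a countable index set, so $\hat{G}$ and $\hat{H}$ are second countable compact Hausdorff topological groups, and therefore metrizable (for instance by Urysohn's metrization theorem). With these hypotheses in hand, \lemref{continuous_hom_extension} immediately produces the desired continuous extension $\hat{\varphi} : \hat{G} \to \hat{H}$.

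Uniqueness is then automatic: any two continuous extensions must agree on the dense subset $G \subseteq \hat{G}$ and take values in the Hausdorff space $\hat{H}$, hence must coincide everywhere. I expect the only mildly delicate step to be the metrizability verification, since it is the one place where the finite generation hypothesis is actually used; the rest is a direct assembly of the lemmas already established.
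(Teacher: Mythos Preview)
Your proposal is correct and follows essentially the same route as the paper: invoke \lemref{profinite_hom} for continuity of $\varphi$, compose with $H\hookrightarrow\hat{H}$, then apply \lemref{continuous_hom_extension} after checking that finite generation gives only countably many finite quotients and hence metrizability of the completions. The only cosmetic differences are that the paper phrases metrizability directly as ``a countable product of discrete metric spaces is metric'' rather than via Urysohn, and it leaves the uniqueness clause implicit in \lemref{continuous_hom_extension} where you spell it out.
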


\begin{proof}
By \lemref{profinite_hom} we know that $\varphi$ is continuous,
and therefore its composition with the embedding $H\hookrightarrow\hat{H}$
is continuous. To apply \lemref{continuous_hom_extension} we first
note that by definition $G$ is dense in $\hat{G}$, and every profinite
completion, and in particular $\hat{H}$ is compact. Finally, if a
group $G$ is finitely generated, then it has only countably many
finite quotients. It follows that $\prod\nicefrac{G}{K}$ is a countable
product of (discrete) metric spaces and therefore it is a metric space
in itself. In our case, we get that $G,H,\hat{G}$ and $\hat{H}$
are metric space. We can now apply \lemref{continuous_hom_extension}
to prove this lemma.
\end{proof}
In our proofs we work with subgroups of a group with the profinite
topology, so we want a simple condition when such a subgroup inherits
the profinite topology as the subspace topology. Once a subgroup $H\leq G$
has the profinite topology, we also want to show that its closure
$\overline{H}$ in $\hat{G}$ is isomorphic to $\hat{H}$.
\begin{lem}
\label{lem:induced_profinite}Let $G$ be a group with the profinite
topology and $H\leq G$ a subgroup. Then the induced topology on $H$
is the profinite topology if one of the following is true.
\begin{enumerate}
\item $H$ is a free factor of $G$.
\item $H$ is a finite index subgroup of $G$.
\item $H$ can be reached by a finite sequence of taking free factors and
finite index subgroups.
\end{enumerate}
\end{lem}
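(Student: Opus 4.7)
The plan is to show both containments between the subspace topology on $H$ (inherited from the profinite topology on $G$) and the profinite topology on $H$ itself. The containment subspace $\subseteq$ profinite is automatic for any subgroup: a basic open set of the form $H \cap gK$ with $K \fidealeq G$ of finite index is either empty or equals $h(H \cap K)$ for some $h \in H$, and $H \cap K$ is of finite index in $H$. So the content of the lemma lies entirely in the reverse direction, namely that every coset of every finite index subgroup of $H$ is open in the subspace topology inherited from $G$.

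For part (2), let $L \leq H$ have finite index; then $L$ also has finite index in $G$, so its normal core $L_G := \bigcap_{g \in G} gLg^{-1}$ is a finite index normal subgroup of $G$, and $L$ is a union of cosets of $L_G$. These $L_G$-cosets are open in the profinite topology of $G$, so $L$, and any $H$-coset of $L$, is open in the subspace topology. For part (1), write $G = H * H'$ and use the canonical retraction $\pi \colon G \to H$ (the identity on $H$, trivial on $H'$). For any finite index normal subgroup $N \fidealeq H$, the composition $G \overset{\pi}{\to} H \to H/N$ is a surjective homomorphism onto a finite group, so $\pi^{-1}(N)$ has finite index in $G$, and since $\pi$ restricted to $H$ is $\mathrm{id}_H$ we have $\pi^{-1}(N) \cap H = N$. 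Hence $N$ (and by translation every coset of $N$) is open in the subspace topology. A general finite index subgroup $L \leq H$ contains its core $L_H \fidealeq H$ and is a union of cosets of $L_H$, so $L$ is open as well.

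Part (3) then follows by induction on the length of the sequence, applying case (1) or case (2) at each step to the intermediate subgroup. The main conceptual point, and the only genuinely nontrivial ingredient, is the use of the retraction in case (1): without such a homomorphism from $G$ back to $H$ it would not be clear how to lift a finite quotient of $H$ to a finite quotient of $G$, which is precisely what is needed in order to realize cosets of finite index subgroups of $H$ as traces of open sets of $G$. This is why the lemma assumes a structural hypothesis on $H$ rather than working for arbitrary subgroups.
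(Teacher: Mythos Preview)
Your proof is correct and follows essentially the same route as the paper's: the easy containment is noted first, case~(2) uses that a finite index subgroup of $H$ is still of finite index in $G$ and hence open, case~(1) uses the retraction $\pi:G=H*H'\to H$ to pull a finite quotient of $H$ back to a finite quotient of $G$, and case~(3) is an induction. The only cosmetic difference is that the paper phrases the argument in terms of continuity of homomorphisms $H\to K$ while you phrase it in terms of openness of finite index subgroups of $H$; these are equivalent formulations of the same idea.
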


\begin{proof}
In general, the induced topology on $H$ is the weakest topology such
that any composition $H\to G\to K$, $K$ finite, is continuous. It
follows that this topology is weaker than the profinite topology on
$H$. To show equality we need to show that any $\varphi:H\to K$,
$K$ finite, is continuous, or equivalently the kernel is open in
$H$.
\begin{enumerate}
\item Write $G=H*\tilde{H}$ and let $\pi:G\to H$ be the projection which
fixes $H$ and sends $\tilde{H}$ to the identity. If $K$ is finite,
and $\varphi:H\to K$, then $G\overset{\pi}{\to}H\overset{\varphi}{\to}K$
is continuous by the definition of the profinite topology on $G$,
and since $H\hookrightarrow G$ is continuous, then so is $\varphi:H\to G\overset{\pi}{\to}H\overset{\varphi}{\to}K$,
which completes this case.
\item Let $\varphi:H\to K$ where $K$ is finite and set $N=\ker\left(\varphi\right)$
. Under the assumption that $\left[G:H\right]<\infty$, we get that
$\left[G:N\right]<\infty$. It follows that $N$ is closed and open
in $G$ and therefore in $H$, implying that $\varphi$ is continuous.
\item Follows by induction.
\end{enumerate}
\begin{lem}
Let $G$ be a group with the profinite topology and $H\leq G$ a subgroup,
both of which are finitely generated. If the induced topology on $H$
is the profinite topology, then $\overline{H}$ in $\hat{G}$ is naturally
isomorphic to $\hat{H}$.
\end{lem}

\begin{proof}
Consider the continuous map $\varphi:H\to\overline{H}\leq\hat{G}$.
Since $\hat{G}$ is compact, then so is $\overline{H}$, so we can
then use \lemref{continuous_hom_extension} to extend $\varphi$ to
a continuous map $\hat{\varphi}:\hat{H}\to\overline{H}$. Since $\hat{\varphi}$
is injective on the dense subgroup $H$ inside $\hat{H}$ (both of
which are metric spaces) it must also be injective on $\hat{H}$.
On the other hand, since $H$ is dense in $\overline{H}$ and $H\leq Im\left(\hat{\varphi}\right)$,
we conclude that $\hat{\varphi}$ is surjective as well. Finally,
since $\hat{\varphi}$ is a continuous bijection between compact and
Hausdorff spaces, its inverse is continuous as well, so we conclude
that $\hat{\varphi}:\hat{H}\to\overline{H}$ is a homeomorphism as
well, thus completing the proof.
\end{proof}
Finally, the next lemma shows how to understand the topology on $\hat{G}$.
This topology is generated by open (and closed) finite index subgroups
of $\hat{G}$ which correspond to finite index subgroups of $G$.
\end{proof}
\begin{lem}
\label{lem:finite_index_correspondence}Let $G$ be a finitely generated
group with the profinite topology. The map $N\to\overline{N}\leq\hat{G}$
is a bijection between finite index subgroups of $G$ and finite index
open and closed subgroups of $\hat{G}$ with the inverse map $\overline{N}\mapsto\overline{N}\cap G=N$. 
\end{lem}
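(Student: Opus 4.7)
The plan is to set up the bijection by passing through the normal core of $N$ and using the extension machinery of \lemref{continuous_hom_extension}. First, suppose $N\leq G$ has finite index, and let $N_{0}\idealeq G$ be its normal core, so that $N_{0}$ is finite index, normal, and contained in $N$. The quotient $\pi:G\to G/N_{0}$ is continuous by the very definition of the profinite topology, and since $G$ is finitely generated $\hat{G}$ is a metric space, while $G/N_{0}$ is finite (hence a compact metric group), so by \lemref{continuous_hom_extension} the map $\pi$ extends to a continuous homomorphism $\hat{\pi}:\hat{G}\to G/N_{0}$.

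The key computation is $\ker\hat{\pi}=\overline{N_{0}}$. The inclusion $\overline{N_{0}}\subseteq\ker\hat{\pi}$ is immediate: $\ker\hat{\pi}$ is closed and contains $N_{0}$. Conversely, if $x\in\ker\hat{\pi}$ and $g_{i}\in G$ is a net with $g_{i}\to x$, then $\pi(g_{i})=\hat{\pi}(g_{i})\to\hat{\pi}(x)=e$ in $G/N_{0}$; since $G/N_{0}$ is discrete and finite, eventually $\pi(g_{i})=e$, so $g_{i}\in N_{0}$ and $x\in\overline{N_{0}}$. Thus $\overline{N_{0}}$ is a clopen normal subgroup of index $[G:N_{0}]$ in $\hat{G}$. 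From $N_{0}\leq N$ and the continuity of $\hat{\pi}$ one obtains $\overline{N}=\hat{\pi}^{-1}(\pi(N))=N\cdot\overline{N_{0}}$, a finite union of cosets of $\overline{N_{0}}$, so $\overline{N}$ is clopen of index $[G:N]$. Intersecting with $G$ yields $\overline{N}\cap G=\hat{\pi}^{-1}(\pi(N))\cap G=\pi^{-1}(\pi(N))=N$, which gives both the forward map and the left inverse relation.

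For the reverse direction, let $M\leq\hat{G}$ be clopen of finite index and set $N:=M\cap G$. The inclusion $G\hookrightarrow\hat{G}$ descends to an injection of coset spaces $G/N\hookrightarrow\hat{G}/M$, so $[G:N]\leq[\hat{G}:M]<\infty$. To see $\overline{N}=M$, the inclusion $\overline{N}\subseteq M$ follows from $M$ being closed, and conversely for $x\in M$, choose a net $g_{i}\in G$ with $g_{i}\to x$; since $M$ is open, eventually $g_{i}\in M\cap G=N$, so $x\in\overline{N}$. The two maps $N\mapsto\overline{N}$ and $M\mapsto M\cap G$ are therefore mutually inverse, completing the bijection.

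The only genuinely delicate point is the identification $\ker\hat{\pi}=\overline{N_{0}}$, where the discreteness of the finite quotient is essential; every other step is a formal consequence of density of $G$ in $\hat{G}$ together with the openness of clopen subgroups.
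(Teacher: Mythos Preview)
Your proof is correct and follows essentially the same route as the paper: both arguments extend the quotient map $G\to G/K$ to $\hat{G}$ via \lemref{continuous_hom_extension} and then exploit density of $G$ in $\hat{G}$ to recover $U=\overline{U\cap G}$ for clopen $U$. Your treatment is in fact more careful than the paper's, which only writes out the case of a \emph{normal} finite-index subgroup explicitly; your use of the normal core $N_{0}$ to reduce the general case is exactly the missing step, and your verification that $[G:M\cap G]<\infty$ via the coset injection is likewise something the paper leaves implicit.
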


\begin{proof}
The trick here if $\varphi:\hat{G}\to H$ is continuous for some finite
group $H$ with the discrete topology, then the inverse of any subset
from $H$ is closed an open. But if $U\subseteq\hat{G}$ is such a
set, then since $G$ is dense in $\hat{G}$, for any $\hat{g}\in U$
we can find $g_{i}\in U\cap G$ which converge to $\hat{g}$. It follows
that $U\subseteq\overline{U\cap G}\subseteq\overline{U}=U$ so we
get an equality $U=\overline{U\cap G}$. In particular this is true
for finite index closed and open subgroups of $\hat{G}$.

For the other direction, if $K\fidealeq G$, then the map $\pi:G\to\nicefrac{G}{K}$
is continuous in the profinite topology (by definition), so by \lemref{continuous_hom_extension}
we have the continuous extension $\hat{\pi}:\hat{G}\to\nicefrac{G}{K}$.
In particular $\hat{\pi}^{-1}\left(e\right)\fidealeq\hat{G}$ is a
closed and open subgroup such that $\hat{\pi}^{-1}\left(e\right)\cap G=\pi^{-1}\left(e\right)=K$,
so by our argument above $\hat{\pi}^{-1}\left(e\right)=\overline{K}$. 
\end{proof}
\bibliographystyle{plain}
\bibliography{algebra}

\end{document}